\theoremstyle{plain}
\newcounter{myremark}
\numberwithin{myremark}{section}
\theoremstyle{plain}
\newtheorem{theorem}{Theorem}
\newtheorem{lemma}[theorem]{Lemma}
\newtheorem{proposition}[theorem]{Proposition}
\theoremstyle{definition}
\newtheorem{example}[theorem]{Example}
\newcommand{\NN}{\mathbb{N}}
\newcommand{\PP}{\mathbb{P}}
\newcommand{\RR}{\mathbb{R}}
\newcommand{\ZZ}{\mathbb{Z}}
\renewcommand{\span}[1]{\mbox{{\rm span\/}}\left\{#1\right\}}
\begin{document}

\title[Some relations between the Riemann zeta function and the generalized Bernoulli polynomials of level $m$]{Some relations between the Riemann zeta function and the generalized Bernoulli polynomials of level $m$}

\author[Y. Quintana]{Yamilet Quintana$^{(1)}$}
\address{\noindent Departamento de Matem\'aticas Puras y Aplicadas,
Edificio Matem\'aticas y Sistemas (MYS), Apartado Postal: 89000, Caracas 1080 A, Universidad Sim\'on Bol\'{\i}var,
Venezuela} \email{yquintana@usb.ve}
\thanks{$(1)\,\,\,$ Supported in part by Decanato de Investigaci\'on y Desarrollo, Universidad Sim\'on Bol\'{\i}var, Venezuela, grant DID-USB (S1-IC-CB-004-17).}

\author{H\'ector Torres-Guzm\'an}
\address{\noindent Departamento de Matem\'aticas Puras y Aplicadas,
Edificio Matem\'aticas y Sistemas (MYS), Apartado Postal: 89000, Caracas 1080 A, Universidad Sim\'on Bol\'{\i}var,
Venezuela}
\email{12-11307@usb.ve}

\subjclass[2010]{65D32, 41A55, 65B15, 33F05.}
\keywords{Bernoulli polynomials; generalized Bernoulli polynomials of level $m$; Euler-Maclaurin quadrature formulae; quadrature formula; Riemann zeta function.}

\begin{abstract}
The main purpose of this paper is to show some relations between the Riemann zeta function and the generalized Bernoulli polynomials of level $m$. Our approach is based on the use of Fourier expansions for the periodic generalized Bernoulli functions of level $m$, as well as  quadrature formulae of Euler-Maclaurin type. Some illustrative examples involving such relations are also given.
\end{abstract}

\maketitle{}

\markboth{Y. QUINTANA AND H. TORRES-GUZM\'AN \mbox{}}{
Riemann zeta function and the generalized Bernoulli polynomials of level $m$}

\section{Introduction}

\label{[Section-1]-Introduction}

Let $\zeta(s)$ be the Riemann zeta function defined by
$$\zeta(s)=\sum_{n=1}^{\infty} \frac{1}{n^{s}}, \quad \Re(s)>1.$$
It is a classical result due to Riemann that $\zeta(s)$ can be analytically continued to a meromorphic function on the
whole complex plane with the only pole at $s = 1$, which is a simple pole with residue $1$. Also, if we consider the classical Bernoulli polynomials given by
$$\frac{z e^{xz}}{e^{z}-1} =\displaystyle\sum\limits_{n=0}^{\infty}
B_{n}(x)\frac{z^n}{n!}, \quad |z|<2\pi,$$
and the classical Bernoulli numbers, $B_{n}=B_{n}(0)$,  for all $n\geq 0$, then it is well known the following relation  between $\zeta(s)$ and the Bernoulli polynomials:

\begin{equation}
\label{equa1}
\zeta(2k)= \frac{(-1)^{k-1}\pi^{2k}2^{2k-1}}{(2k)!}B_{2k}, \quad k\geq 1.
\end{equation}

Euler's relation \eqref{equa1} provides an elegant formula for the explicit evaluation of $\zeta(2k)$, which shows the arithmetical nature of $\zeta(2k)$. However, for the odd zeta values $\zeta(2k+1)$ there is very little known information. For instance, in his paper of 1981 R. Ap\'ery showed that $\zeta(3)$ is irrational, but for $k\geq 2$ the arithmetical nature of $\zeta(2k+1)$ remains open (cf.  \cite{A1973,Ara,Ay,B2017,BS2017,CNRV2015} and the references thereof).

\medskip

In this contribution we are interested in exploring  similar relations to \eqref{equa1} in the setting of the generalized Bernoulli polynomials of level $m$ \cite{NB,QRU}. In order to do that, we show some constraints of the use of Fourier expansions for the periodic generalized Bernoulli functions of level $m$, as well as, our approach which is mainly based on  quadrature formulae of Euler-Maclaurin type.

\medskip

The outline of the paper as follows. Section \ref{[Section-2]-Stype} provides a short
background about some relevant properties of the generalized Bernoulli polynomials of level $m$.  Section \ref{[Section-3]-Basic} is devoted to show some constraints of the use of Fourier expansions for the periodic generalized Bernoulli functions of level $m$ (see Theorems  \ref{pzeta1} and \ref{htyq1}). Finally, Section  \ref{[Section-4]-Basic} contains the basic ideas in order to obtain quadrature formulae of Euler-Maclaurin type based on generalized Bernoulli polynomials of level $m$ (see Theorem \ref{htyq2}). Also, in this section is proved a result that reveals an interesting property about the applications of the quadrature formulae of Euler-Maclaurin type based on these polynomials (see Theorem \ref{teoconv}). As usual, throughout this paper the convention $0^{0}=1$ will be adopted and an empty sum will be interpreted to be  zero.

\section{Generalized Bernoulli polynomials of level $m$: some properties}
\label{[Section-2]-Stype}

For a fixed $m\in\NN$, the generalized Bernoulli polynomials of level $m$ are defined by means of the following generating function \cite{NB,QRU,SM,SC2012,Sri}
\begin{equation}
\label{mpol}
\displaystyle \frac{z^{m}e^{xz}}{e^{z}-\sum_{l=0}^{m-1}\frac{z^{l}}{l!}} =\displaystyle\sum\limits_{n=0}^{\infty}
B_n^{[m-1]}(x)\frac{z^n}{n!}, \quad |z|<2\pi
\end{equation}
and, the generalized Bernoulli numbers of level $m$ are defined by $B_n^{[m-1]}:= B_n^{[m-1]}(0)$, for all $n\geq 0$. It is clear that if $m=1$ in \eqref{mpol}, then  we obtain the definition of the classical Bernoulli polynomials $B_{n}(x)$, and classical Bernoulli numbers, respectively, i.e., $B_{n}(x)=B_{n}^{[0]}(x)$, and $B_{n}=B_{n}^{[0]}$, respectively,  for all $n\geq 0$.

\medskip

It is not difficult to check that the first four generalized Bernoulli polynomials of level $m$ are:

\begin{eqnarray*}
B_{0}^{[m-1]}(x)&=& m!,\\
B_{1}^{[m-1]}(x)&=& m!\left(x-\frac{1}{m+1}\right),\\
B_{2}^{[m-1]}(x)&=& m!\left( x^{2}- \frac{2}{m+1}x+\frac{2}{(m+1)^{2}(m+2)}\right),\\
B_{3}^{[m-1]}(x)&=& m!\left( x^{3}- \frac{3}{m+1}x^{2} +\frac{6}{(m+1)^{2}(m+2)}x+ \frac{6(m-1)}{(m+1)^{3}(m+2)(m+3)}\right).
\end{eqnarray*}

\medskip

The following results summarizes some properties of the generalized Bernoulli polynomials of level $m$ (cf. \cite{NB,HQU,QRU}).

\begin{proposition}
\label{elementaryp}
\cite[Proposition 1]{QRU} For a fixed $m\in\NN$, let $\left\{B_n^{[m-1]}(x)\right\}_{n\geq 0}$  be the sequence of generalized  Bernoulli polynomials of level $m$. Then the following statements hold:

\begin{enumerate}
\item[a)] Summation formula. For every $n\geq 0$,
\begin{eqnarray}
\label{[Sec2]-bsf1}
B_n^{[m-1]}(x)&=&\sum_{k=0}^{n}\binom{n}{k}B_{k}^{[m-1]}\,x^{n-k}.
\end{eqnarray}

\item[b)] Differential relations (Appell polynomial sequences). For  $n,j\geq 0$ with $0\leq j\leq n$, we have
\begin{equation}
\label{[Sec2]-AppDer1}
\lbrack B_n^{[m-1]}(x)]^{(j)}=\frac{n!}{(n-j)!}\, B_{n-j}^{[m-1]}(x).
\end{equation}

\item[c)] Inversion formula. \cite[Equation (2.6)]{NB} For every $n\geq 0$,
\begin{equation}
\label{[Sec2]-recu1}
x^{n}= \sum_{k=0}^{n}\binom{n}{k}\frac{k!}{(m+k)!}B_{n-k}^{[m-1]}(x).
\end{equation}

\item[d)] Recurrence relation. \cite[Lemma 3.2]{NB} For every $n\geq 1$,
\begin{equation*}
\label{[Sec2]-recu2}
B_n^{[m-1]}(x)= \left( x- \frac{1}{m+1}\right)B_{n-1}^{[m-1]}(x)-\frac{1}{n(m-1)!} \sum_{k=0}^{n-2}\binom{n}{k}B_{n-k}^{[m-1]}B_{k}^{[m-1]}(x).
\end{equation*}

\item[e)] Integral formulas.
\begin{eqnarray}
\label{[Sec2]-Intf1}
\int_{x_{0}}^{x_{1}} B_n^{[m-1]}(x)dx&=&\frac{1}{n+1}\left[B_{n+1}^{[m-1]}(x_{1})-B_{n+1}^{[m-1]}(x_{0})\right]
\end{eqnarray}
\begin{equation*}
\label{[Sec2]-Intf2}
=\sum_{k=0}^{n}\frac{1}{n-k+1}\binom{n}{k}
B_{k}^{[m-1]}((x_{1})^{n-k+1}-(x_{0})^{n-k+1}).
\end{equation*}

\begin{eqnarray}
\label{[Sec2]-Intf3}
B_{n}^{[m-1]}(x)&=&n\int_{0}^{x} B_{n-1}^{[m-1]}(t) dt + B_{n}^{[m-1]}.
\end{eqnarray}
\item[f)] \cite[Theorem 3.1]{NB} Differential equation. For every $n\geq 1$, the polynomial $B_{n}^{[m-1]}(x)$ satisfies the following differential equation
\begin{equation*}
\label{[Sec2]-diffe1}
\frac{B_{n}^{[m-1]}}{n!}y^{(n)}+ \frac{B_{n-1}^{[m-1]}}{(n-1)!}y^{(n-1)}+\cdots+ \frac{B_{2}^{[m-1]}}{2!}y''+ (m-1)!\left( \frac{1}{m+1}-x\right)y'+ n(m-1)!y=0.
\end{equation*}
\end{enumerate}
\end{proposition}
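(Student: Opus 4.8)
The plan is to exploit the fact that $\{B_n^{[m-1]}(x)\}_{n\ge 0}$ is an Appell sequence, generated by $A(z)e^{xz}$ with $A(z)=z^m\big/\big(e^z-\sum_{l=0}^{m-1}z^l/l!\big)$, and to run the standard factorization (raising/lowering operator) argument. The lowering operator is already supplied by part b) with $j=1$, namely $\frac{d}{dx}B_n^{[m-1]}(x)=n\,B_{n-1}^{[m-1]}(x)$. I would then construct a raising operator $\widehat{R}$ with $\widehat{R}\,B_n^{[m-1]}(x)=B_{n+1}^{[m-1]}(x)$; composing the two gives $\widehat{R}\big(\tfrac{d}{dx}B_n^{[m-1]}\big)=\widehat{R}\,(n B_{n-1}^{[m-1]})=n B_n^{[m-1]}$, so that $B_n^{[m-1]}$ is annihilated by the operator $\widehat{R}\circ\frac{d}{dx}-n\,\mathrm{Id}$. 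Expanding this operator identity and re-expressing the derivatives via part b) should produce exactly the stated differential equation.

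The heart of the argument is the explicit determination of $\widehat R$, which comes from the logarithmic derivative of $A$. Writing $D(z)=e^z-\sum_{l=0}^{m-1}z^l/l!=\sum_{l\ge m}z^l/l!$, I would first record the two elementary facts $A(z)D(z)=z^m$ and $D'(z)=D(z)+\frac{z^{m-1}}{(m-1)!}$, from which $\frac{A'(z)}{A(z)}=\frac{m}{z}-\frac{D'(z)}{D(z)}=\frac{m}{z}-1-\frac{A(z)}{(m-1)!\,z}$. The key observation is that, because $A(0)=B_0^{[m-1]}=m!$, the two polar terms $\frac{m}{z}$ cancel, leaving the genuine power series $\frac{A'(z)}{A(z)}=-1-\frac{1}{(m-1)!}\sum_{k\ge 0}\frac{B_{k+1}^{[m-1]}}{(k+1)!}z^{k}$. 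Since multiplication by $z$ on the generating function corresponds to $\frac{d}{dx}$, this series turns into the differential part of the raising operator: $\widehat R=x+c_0+\sum_{k\ge1}c_k\big(\tfrac{d}{dx}\big)^k$ with $c_0=-\frac1{m+1}$ (after combining the constant $-1$ with the $k=0$ coefficient) and $c_k=-\frac{B_{k+1}^{[m-1]}}{(m-1)!\,(k+1)!}$ for $k\ge1$.

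Finally I would assemble the equation. From $\widehat R\circ\frac{d}{dx}=x\frac{d}{dx}+\sum_{k\ge0}c_k\big(\tfrac{d}{dx}\big)^{k+1}$ and the annihilation identity, setting $y=B_n^{[m-1]}(x)$ gives $\sum_{k\ge0}c_k\,y^{(k+1)}+x\,y'-n\,y=0$; the sum terminates at $k=n-1$ because $y$ has degree $n$. Isolating $k=0$ yields the term $-\frac1{m+1}y'$, and reindexing the remaining terms by $j=k+1\in\{2,\dots,n\}$ turns $\sum_{k\ge1}c_k y^{(k+1)}$ into $-\frac{1}{(m-1)!}\sum_{j=2}^{n}\frac{B_j^{[m-1]}}{j!}y^{(j)}$. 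Multiplying the whole identity by $-(m-1)!$ then recovers precisely the claimed equation, with first-order coefficient $(m-1)!\big(\frac1{m+1}-x\big)$ and zeroth-order coefficient $n(m-1)!$. The main obstacle is the middle step: correctly carrying out the logarithmic-derivative computation, verifying the cancellation of the $1/z$ poles, and identifying the resulting Taylor coefficients with the generalized Bernoulli numbers $B_{k+1}^{[m-1]}$; once the series for $A'/A$ is in hand, the rest is bookkeeping.
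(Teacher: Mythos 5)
Your proposal addresses only part f) of a six-part proposition, so as it stands there is a genuine gap in coverage: parts a), c) and e) are never touched, and part b) is explicitly taken as an input rather than proved. All of a), b), c), e) are short consequences of the generating relation $\sum_{n\ge 0}B_n^{[m-1]}(x)z^n/n!=A(z)e^{xz}$ with $A(z)=z^m\big/\big(e^z-\sum_{l=0}^{m-1}z^l/l!\big)$ (Cauchy products for a) and c), differentiation/integration under the sum for b) and e)), but a complete proof of the proposition must say so. Note also that the paper itself supplies no proof here: it simply quotes the statement from \cite[Proposition 1]{QRU} and \cite{NB}, so there is no internal argument to compare against; the relevant external source (\cite{NB}, Theorem 3.1) obtains the differential equation from the recurrence of part d) together with the Appell property b), which is in substance the same mechanism you use.

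For the part you do treat, the argument is correct and you should make one structural observation explicit: the raising operator you construct \emph{is} part d). Indeed, writing out $\widehat{R}\,B_{n-1}^{[m-1]}=B_n^{[m-1]}$ with your coefficients $c_0=-\tfrac{1}{m+1}$ and $c_k=-\tfrac{B_{k+1}^{[m-1]}}{(m-1)!\,(k+1)!}$, and using b) to convert $\big(\tfrac{d}{dx}\big)^k B_{n-1}^{[m-1]}$ into $\tfrac{(n-1)!}{(n-1-k)!}B_{n-1-k}^{[m-1]}$, reproduces exactly the recurrence in d) after the reindexing $j=n-1-k$. So your logarithmic-derivative computation (which is correct: $D'(z)=D(z)+z^{m-1}/(m-1)!$, the two $m/z$ poles cancel because $A(0)=m!$, and $B_1^{[m-1]}=-m!/(m+1)$ turns the constant term into $-1/(m+1)$) simultaneously proves d) and, after composing with the lowering operator and multiplying by $-(m-1)!$, proves f). Making that identification would upgrade your proposal from a proof of one item to a proof of two, and would match the logical order of the source: recurrence first, differential equation as a corollary. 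The remaining items a), b), c), e) still need their (routine) generating-function arguments.
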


If we denote by $\PP_n$ the linear space of polynomials with real coefficients and degree less than or equal to $n$, then  \eqref{[Sec2]-recu1} implies that

\begin{proposition}
\label{bas1} \cite[Proposition 2]{QRU} For a fixed $m\in\NN$ and each $n\geq 0$, the set\\   $\left\{B_{0}^{[m-1]}(x), B_{1}^{[m-1]}(x), \ldots, B_{n}^{[m-1]}(x)\right\}$ is a basis for $\PP_{n}$, i.e.,
$$\PP_{n}=\mathfrak{B}^{[m-1]}_{n}= \span{B_{0}^{[m-1]}(x), B_{1}^{[m-1]}(x), \ldots, B_{n}^{[m-1]}(x)}.$$
\end{proposition}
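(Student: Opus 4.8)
The plan is to derive the result directly from the inversion formula \eqref{[Sec2]-recu1}, which is precisely the tool anticipated by the sentence preceding the statement. First I would record that $\PP_{n}$ has dimension $n+1$, while the family $B_{0}^{[m-1]}(x), \ldots, B_{n}^{[m-1]}(x)$ consists of exactly $n+1$ polynomials, each lying in $\PP_{n}$: indeed, the summation formula \eqref{[Sec2]-bsf1} shows that $B_{k}^{[m-1]}(x)$ has degree exactly $k$, its leading coefficient being $B_{0}^{[m-1]}=m!\neq 0$. Consequently $\span{B_{0}^{[m-1]}(x), \ldots, B_{n}^{[m-1]}(x)}\subseteq \PP_{n}$, and only the reverse inclusion requires attention.

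For that reverse inclusion I would invoke \eqref{[Sec2]-recu1} applied to each exponent $0\leq j\leq n$,
\begin{equation*}
x^{j}=\sum_{k=0}^{j}\binom{j}{k}\frac{k!}{(m+k)!}\,B_{j-k}^{[m-1]}(x),
\end{equation*}
which exhibits the monomial $x^{j}$ as an explicit linear combination of $B_{0}^{[m-1]}(x), \ldots, B_{j}^{[m-1]}(x)$, all of which belong to the family under consideration. Since the monomials $1, x, \ldots, x^{n}$ constitute a basis of $\PP_{n}$ and each of them lies in $\span{B_{0}^{[m-1]}(x), \ldots, B_{n}^{[m-1]}(x)}$, the inclusion $\PP_{n}\subseteq \span{B_{0}^{[m-1]}(x), \ldots, B_{n}^{[m-1]}(x)}$ follows immediately.

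Putting the two inclusions together gives $\PP_{n}=\span{B_{0}^{[m-1]}(x), \ldots, B_{n}^{[m-1]}(x)}$. A spanning set of $n+1$ vectors in a space of dimension $n+1$ is automatically linearly independent, and hence a basis, which is exactly the assertion of the proposition.

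There is no genuine obstacle in this argument, since its whole content is encapsulated in the inversion formula \eqref{[Sec2]-recu1}; the proof is essentially a dimension count. The only point deserving a moment's care is the triangularity bookkeeping: the expansion of $x^{j}$ involves no $B_{i}^{[m-1]}(x)$ with $i>j$, so the change-of-basis matrix between $\{x^{j}\}_{j=0}^{n}$ and $\{B_{j}^{[m-1]}(x)\}_{j=0}^{n}$ is triangular with nonzero diagonal entries $\tfrac{1}{m!}$, hence invertible. This triangularity makes transparent that the first $n+1$ members of the family already capture every polynomial of degree at most $n$, and it furnishes an alternative, purely linear-algebraic route to the same conclusion.
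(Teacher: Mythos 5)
Your argument is correct and follows exactly the route the paper indicates: the text introduces the proposition with ``then \eqref{[Sec2]-recu1} implies that,'' i.e., the inversion formula plus a dimension/triangularity count, which is precisely what you carry out (the paper itself defers the details to \cite[Proposition 2]{QRU}). Your verification that the change-of-basis matrix is triangular with diagonal entries $1/m!$ is a correct and complete filling-in of that sketch.
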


We conclude this section showing in Figure \ref{f:figur1}  the plots of some generalized  Bernoulli polynomials of level $m$.

\begin{figure}[H]
  \centering
  \subfloat[Level: $m=1$. Degrees: $n=3$ (black), $n=4$ (green), $n=5$ (blue), $n=10$ (red).]{
  \label{f:primera}
  \includegraphics[width=0.4\textwidth]{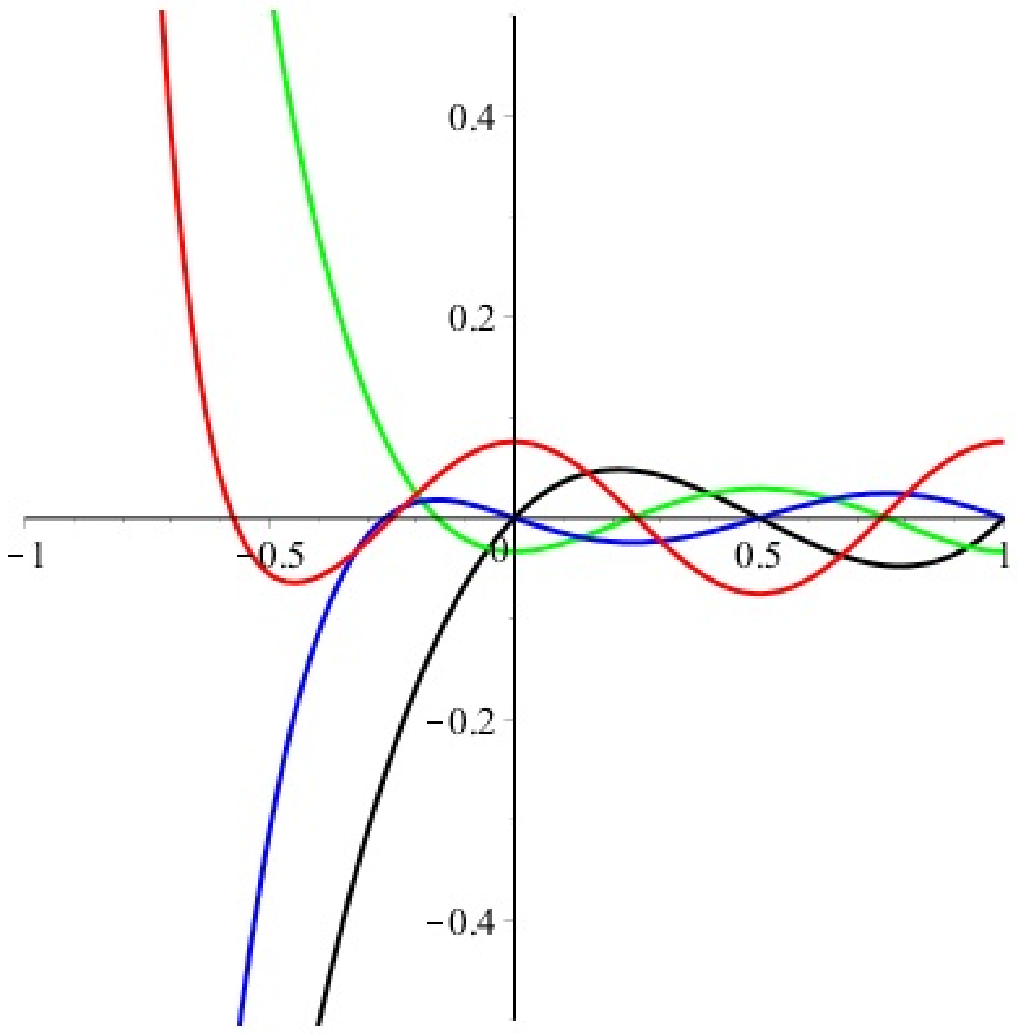}}
  \hspace{8mm}
\subfloat[Level: $m=5$. Degrees: $n=2$ (green), $n=5$ (blue), $n=6$ (black), $n=10$ (red).]{
  \label{f:segunda}
  \includegraphics[width=0.4\textwidth]{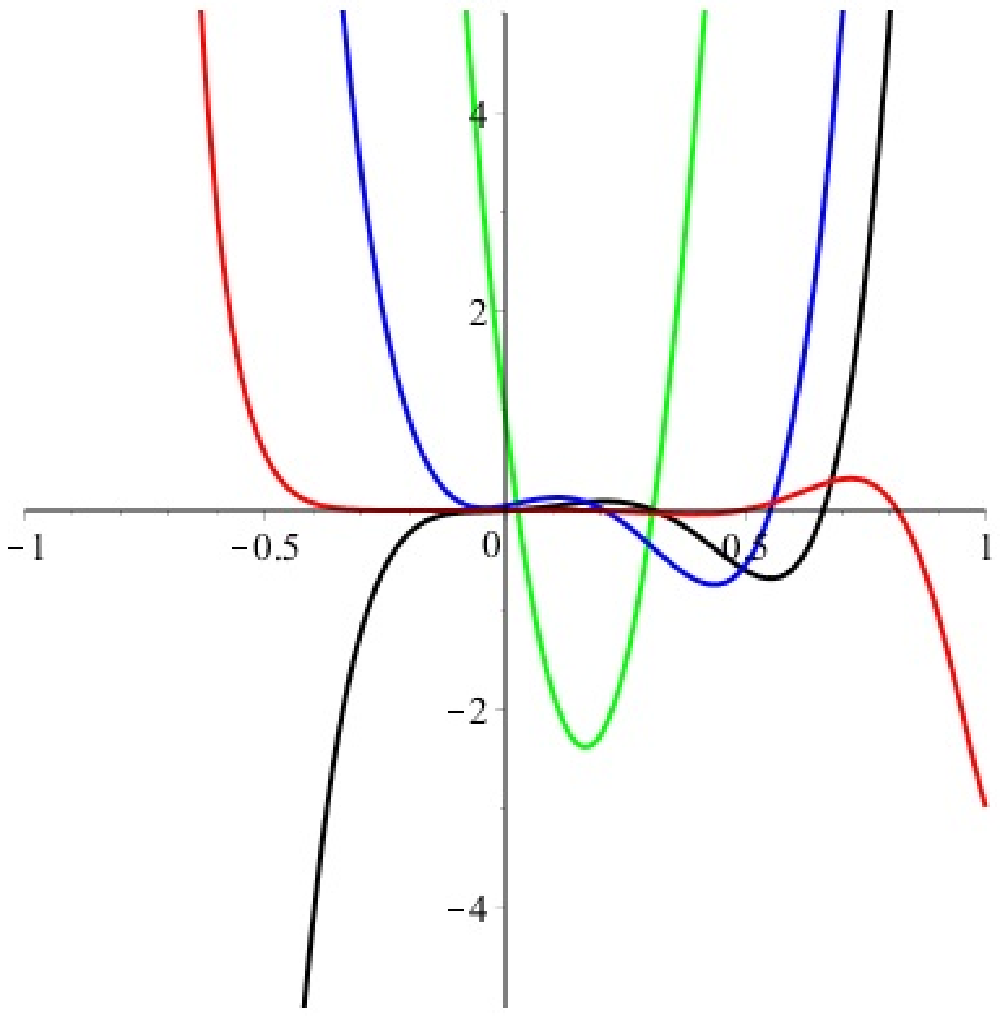}}
\vspace{-3mm}
\caption{Graphs of some generalized  Bernoulli polynomials for the levels $m=1$ (classical Bernoulli polynomials) and $m=5$, respectively.}
\label{f:figur1}
\end{figure}

\section{Fourier expansions and generalized Bernoulli polynomials of level $m$}
\label{[Section-3]-Basic}

For a fixed $m\in\NN$, the periodic generalized Bernoulli functions of level $m$ are defined as follows.
\begin{equation}
\label{peri1}
p_{n}^{[m-1]}(x)=\frac{B_{n}^{[m-1]}(x)}{n!}, \quad 0\leq x<1, \qquad p_{n}^{[m-1]}(x+1)=p_{n}^{[m-1]}(x), \quad x\in \RR.
\end{equation}

The functions $p_{n}^{[m-1]}(x)$ are continous on $\RR$ with continous derivatives up to order $n-1$ only if $m=1$ and $n\geq 2$.

\medskip

In what follows, the symbol ``$\sim$'' is used to refer to the formal Fourier expansion for a given function on an interval, and it is not associated to some notion of convergence in particular, since as we know there are several kinds of convergence involved with the notion of Fourier expansion associated to a given function.

\medskip

For $m=1$ the Fourier expansions for the periodic generalized Bernoulli functions of level $m$ coincide with the Fourier expansions for the periodic Bernoulli functions, i.e.,
\begin{eqnarray}
\label{peri2}
 p_{1}^{[0]}(x) = p_{1}(x)  &\sim& - \sum_{k=1}^{\infty}\frac{2\sin(2\pi k x)}{2\pi k},\\
\label{peri3}
 p_{2r}^{[0]}(x) = p_{2r}(x)&=& (-1)^{r-1}\sum_{k=1}^{\infty}\frac{2\cos(2\pi k x)}{(2\pi k)^{2r}}, \quad r\geq 1, \\
\label{peri4}
p_{2r+1}^{[0]}(x) = p_{2r+1}(x) &=& (-1)^{r-1}\sum_{k=1}^{\infty}\frac{2\sin(2\pi k x)}{(2\pi k)^{2r+1}}, \quad r\geq 1.
\end{eqnarray}

Notice that by a well known result on the uniform convergence of
Fourier expansions (see, for instance, \cite{Foll,Ph,St}), the  Fourier series \eqref{peri3} and \eqref{peri4} are uniformly
convergent, while  this does not hold for the Fourier expansion \eqref{peri2}, since
$$p_{1}(0)=p_{1}(1)=-\frac{1}{2}, \quad \mbox{ and }  \quad \lim_{\epsilon\to 0^{+}}p_{1}(1-\epsilon)=\frac{1}{2},$$
whereas the Fourier expansion \eqref{peri2} assumes the value $0$ at both $x=0$ and $x=1$. Figure \ref{f:figur2}  shows the plots for some periodic Bernoulli functions.

\begin{figure}[H]
  \centering
  \subfloat[Graph of $p_{1}(x)$.]{
  \label{f:segunda}
  \includegraphics[width=0.4\textwidth]{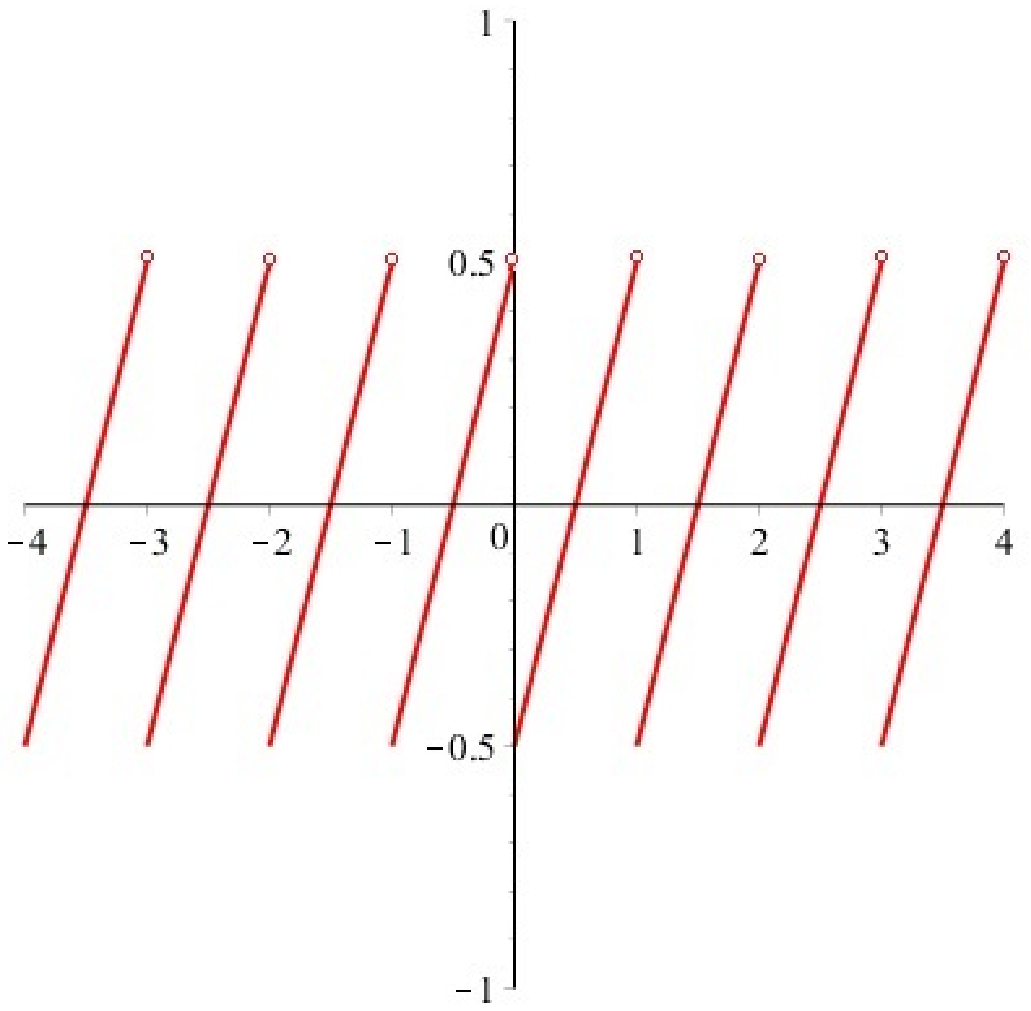}}
  \hspace{10mm}
  \subfloat[Graph of $p_{5}(x)$.]{
  \label{f:tercera}
  \includegraphics[width=0.4\textwidth]{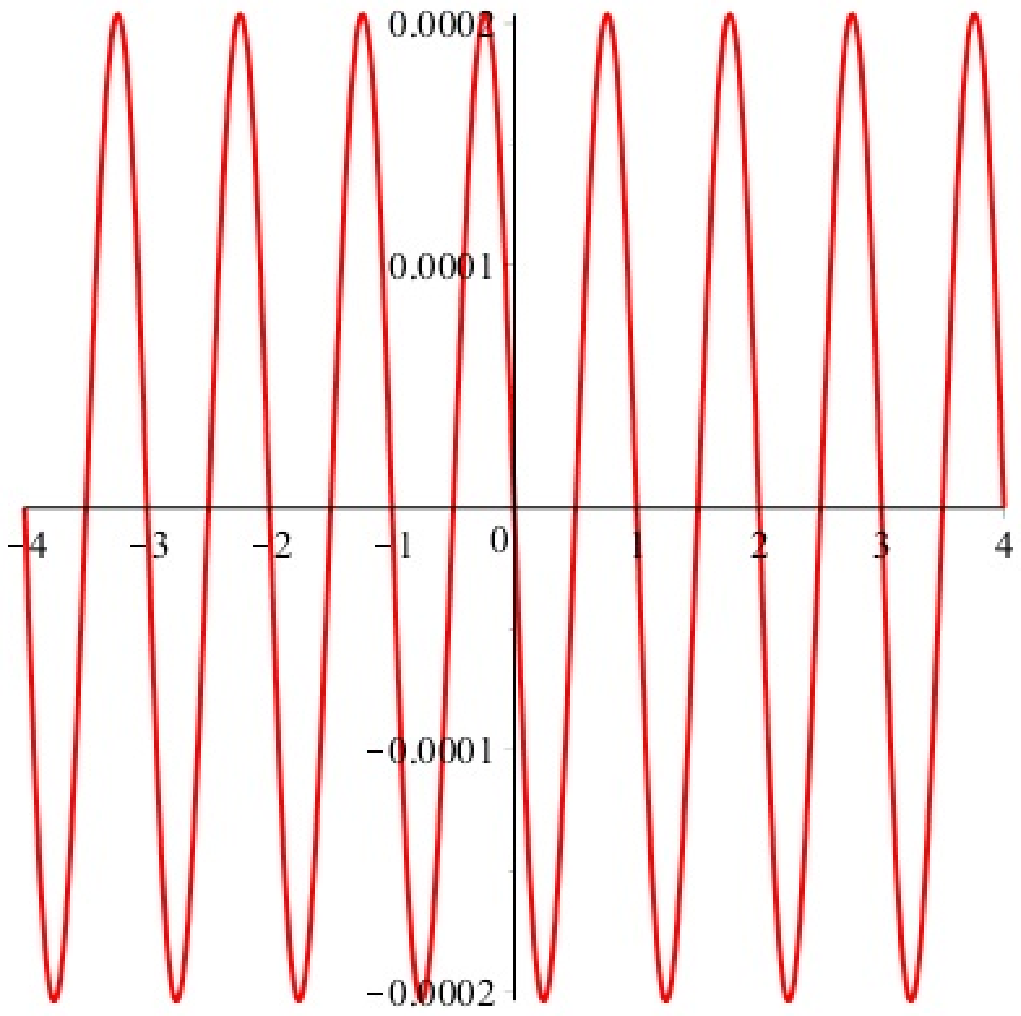}}
 \vspace{5mm}
  \subfloat[Graph of $p_{10}(x)$.]{
  \label{f:cuarta}
  \includegraphics[width=0.4\textwidth]{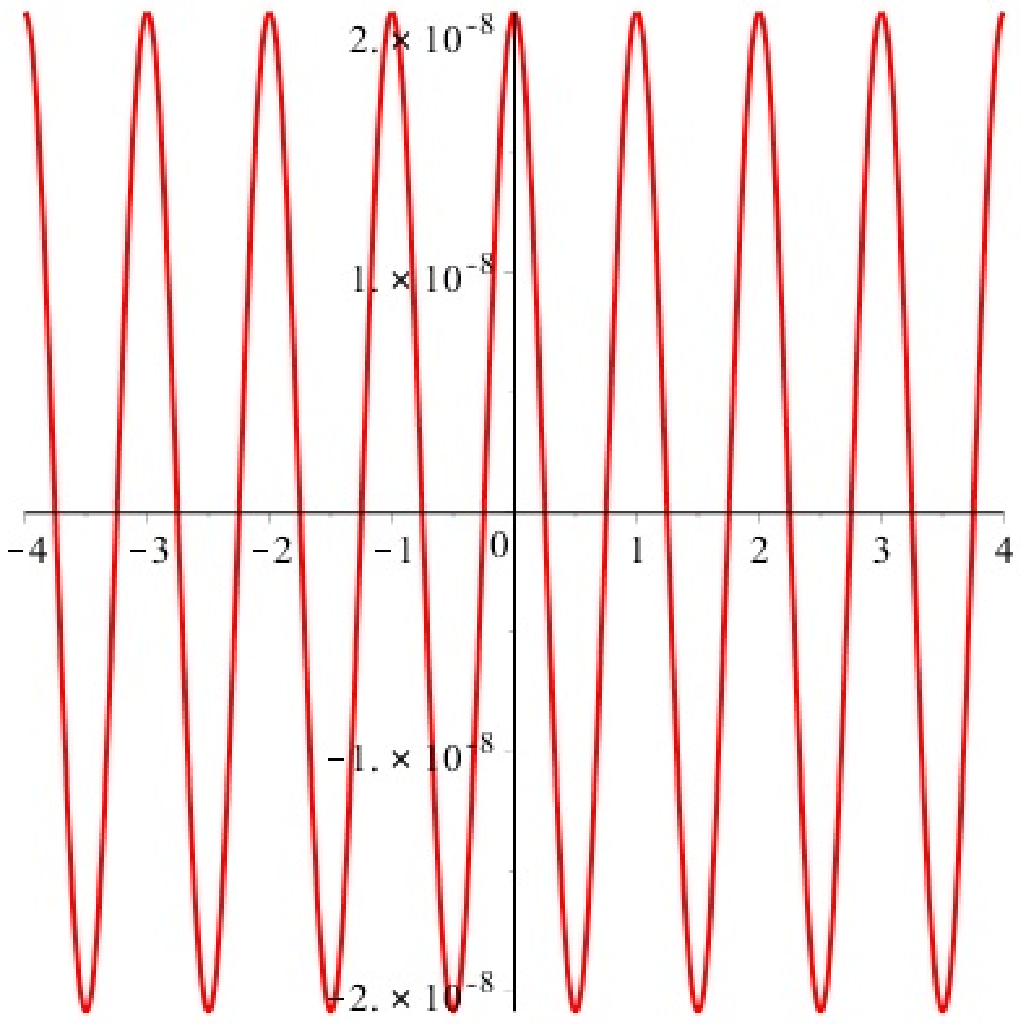}}
\vspace{-2mm}
 \caption{Periodic Bernoulli functions for $n=1,5,10$.}
\label{f:figur2}
\end{figure}

It is important to note that the sequence of functions $\left\{p_{n}(x)\right\}_{n\geq 2}\subset C^{n-2}(-\infty,\infty)$ (when $n=2$, we are using the notation $C^{0}(-\infty,\infty)=C(-\infty,\infty)$), because the  Bernoulli numbers satisfy the equality $B_{n}=(-1)^{n}B_{n}(1)$, for any $n\geq 0$ (see e.g., \cite[Proposition 4.9]{Ara}), $B_{n}=0$, if $n\geq 3$ is odd, and by the condition of periodicity  \eqref{peri1} with $m=1$. In Figure \ref{f:figur3}  the plots for several generalized  Bernoulli polynomials of level $m=5$ and their corresponding periodic generalized Bernoulli functions are shown.

\begin{figure}[H]
  \centering
  \subfloat[Graph of $B_{1}^{[4]}(x)$.]{
  \label{f:quinta}
  \includegraphics[width=0.34\textwidth]{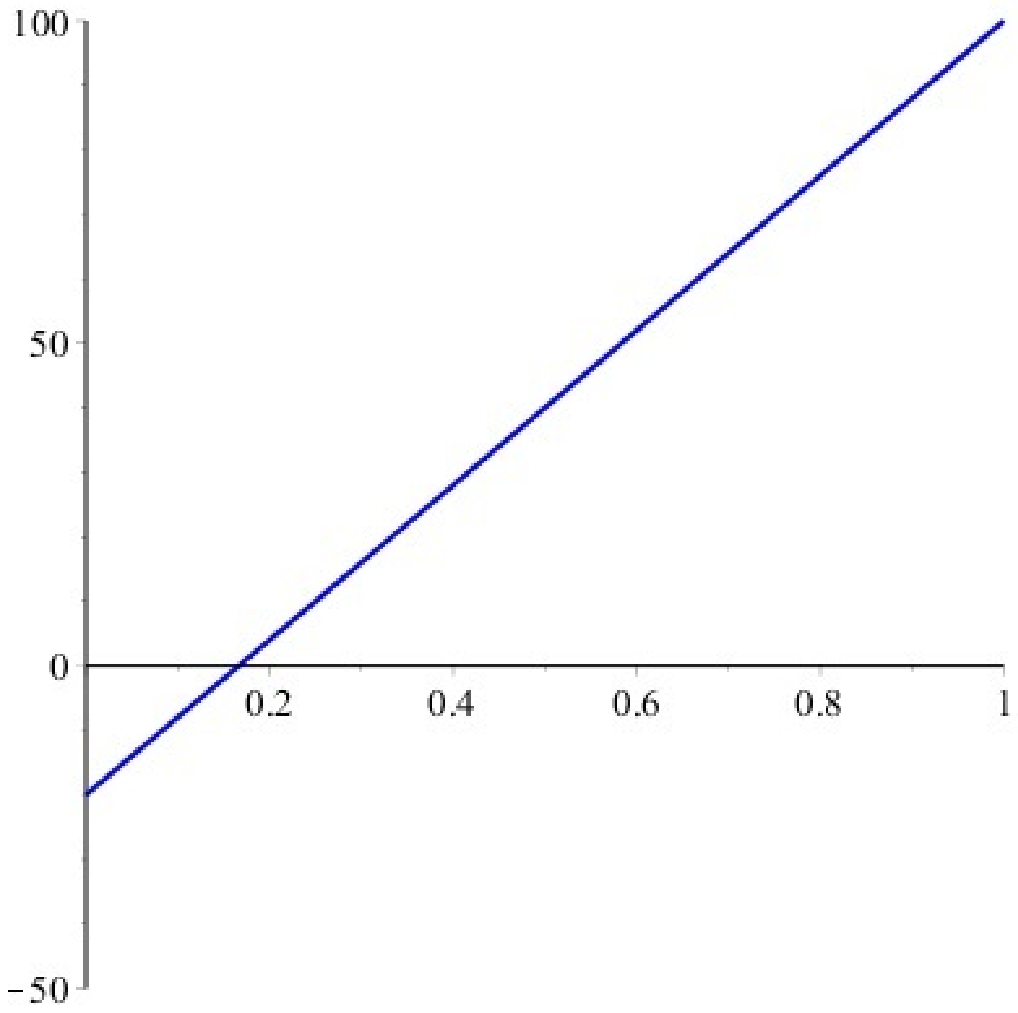}}
  \hspace{10mm}
  \subfloat[Graph of $p_{1}^{[4]}(x)$.]{
  \label{f:sexta}
  \includegraphics[width=0.34\textwidth]{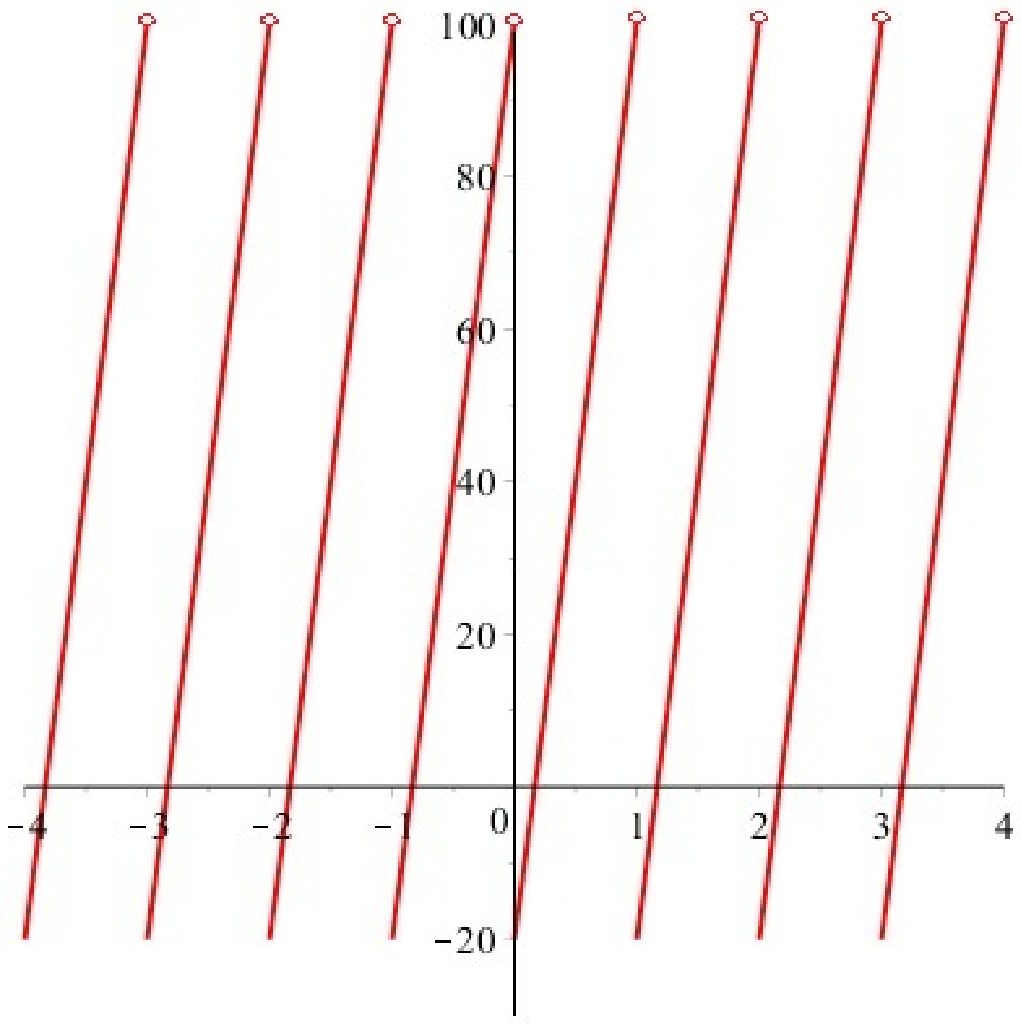}}
 \vspace{5mm}
  \subfloat[Graph of $B_{4}^{[4]}(x)$.]{
  \label{f:septima}
  \includegraphics[width=0.34\textwidth]{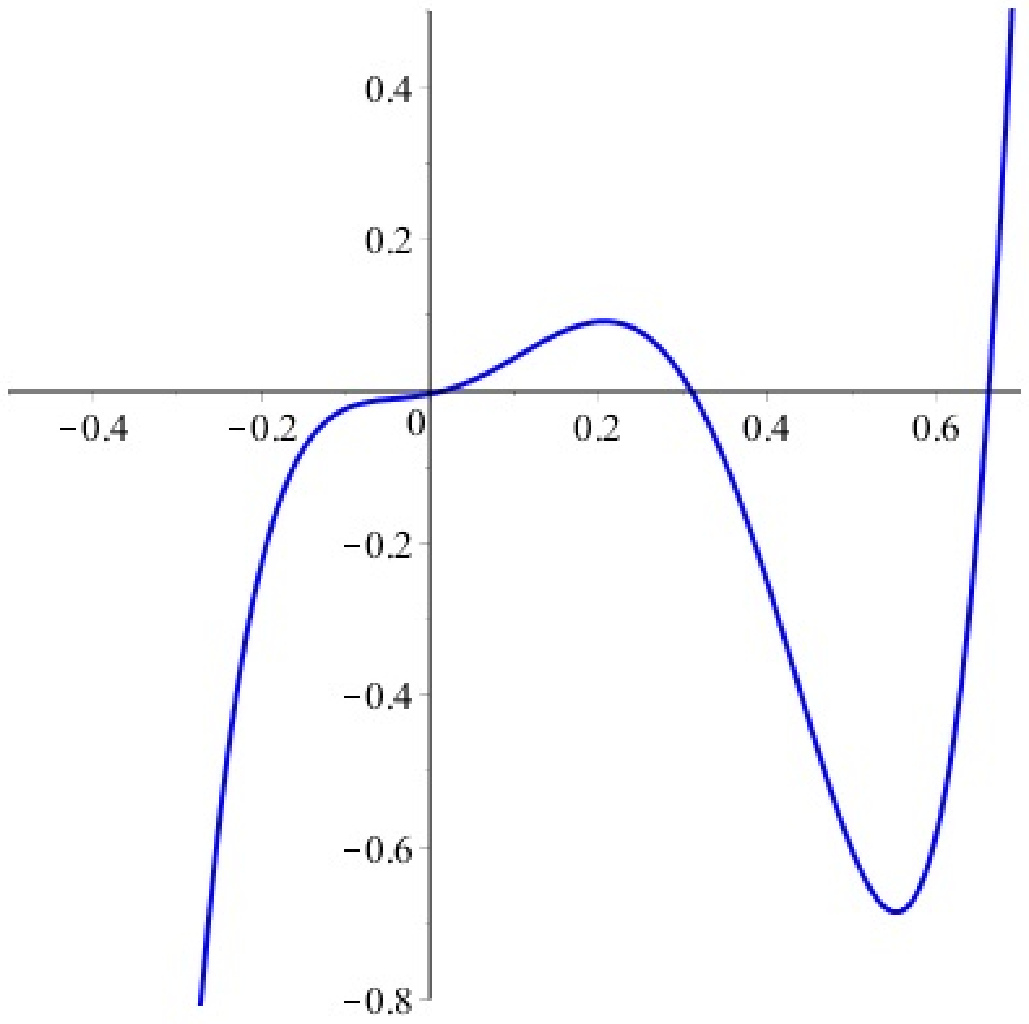}}
\hspace{10mm}
  \subfloat[Graph of $p_{4}^{[4]}(x)$.]{
  \label{f:octava}
  \includegraphics[width=0.34\textwidth]{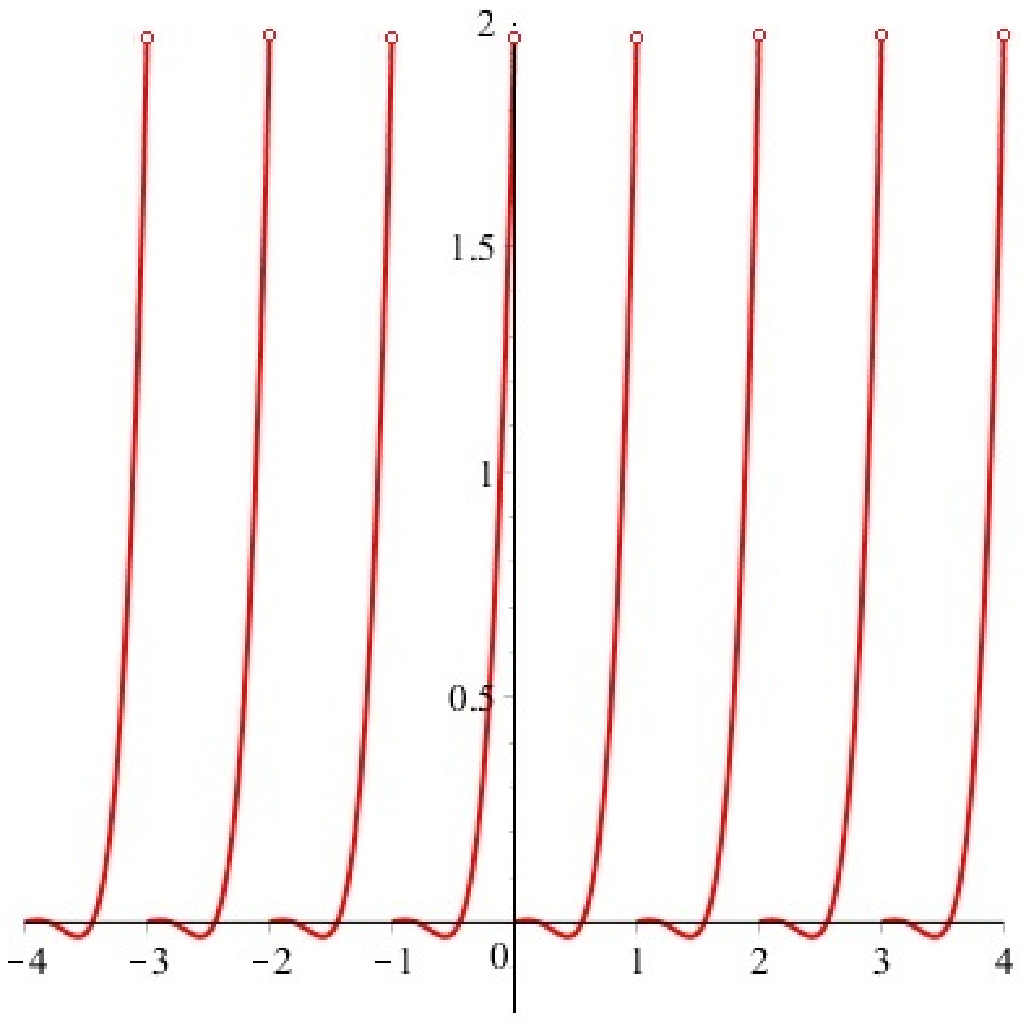}}
 \vspace{5mm}
 \subfloat[Graph of $B_{10}^{[4]}(x)$.]{
  \label{f:novena}
  \includegraphics[width=0.34\textwidth]{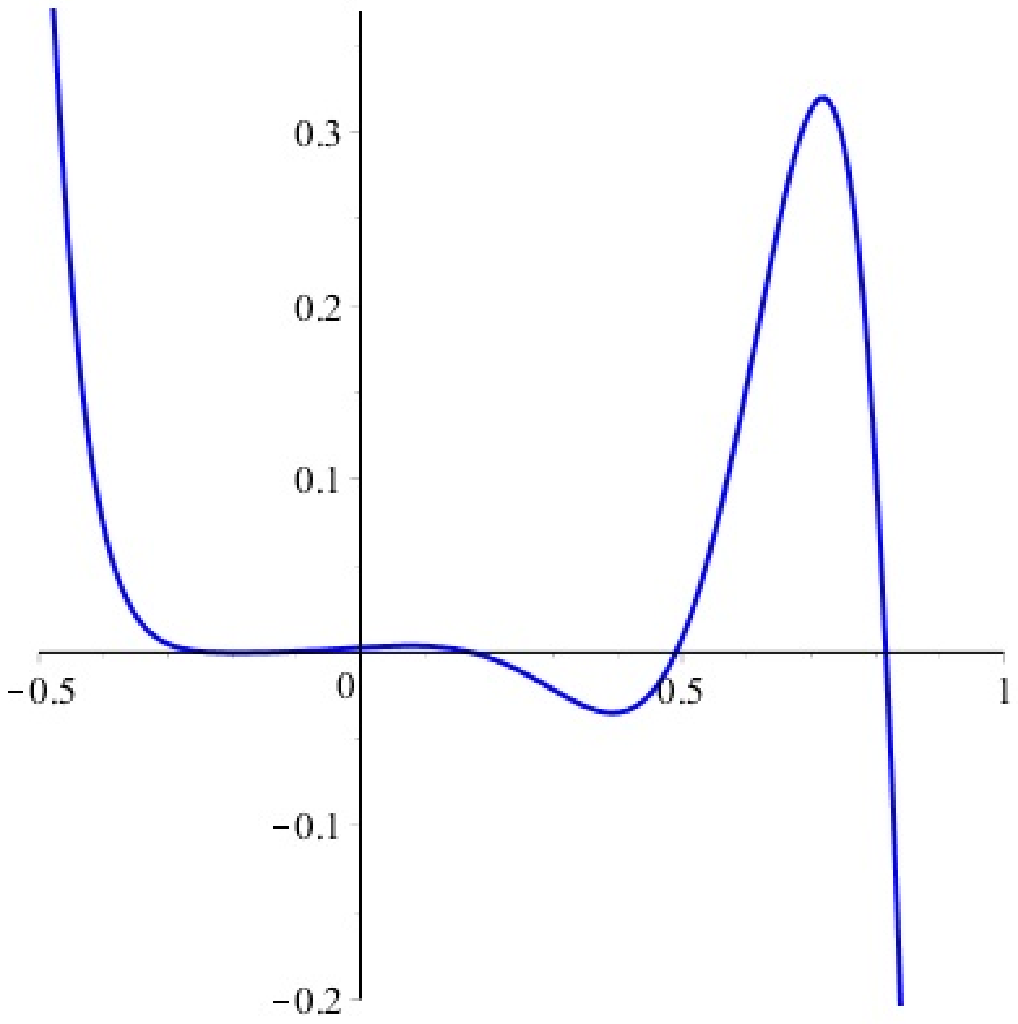}}
\hspace{10mm}
  \subfloat[Graph of $p_{10}^{[4]}(x)$.]{
  \label{f:decima}
  \includegraphics[width=0.34\textwidth]{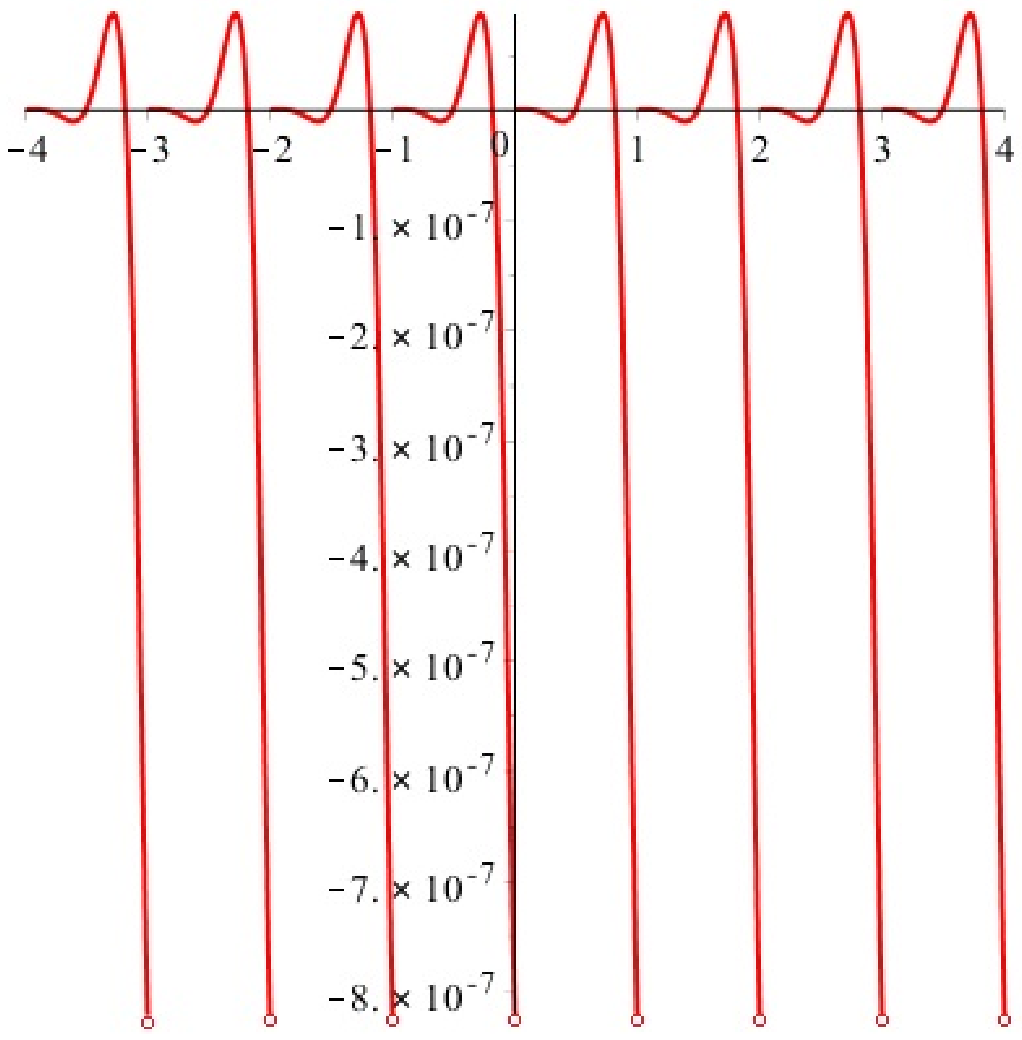}}
\vspace{-2mm}
 \caption{Generalized  Bernoulli polynomials of level $m=5$ and their corresponding periodic generalized Bernoulli functions for $n=1,4,10$.}
\label{f:figur3}
\end{figure}
It is worty to mention that for $m,n>1$ the functions  $p_{n}^{[m-1]}(x)$ are only differentiable on  $\RR\setminus\ZZ$ -unlike what happens when  $m=1$ and $n>1$ are considered (cf. \cite[Chap. 3, Sec. 3.2]{Ph})-. Thus, from \eqref{peri1} and \eqref{[Sec2]-AppDer1} we deduce that $\left[p_{n+1}^{[m-1]}(x)\right]^{'}= p_{n}^{[m-1]}(x)$ for each $x\in (k,k+1)$, $k\in\ZZ$. Hence,
\begin{equation}
\label{DervFPB}
\left[p_{n+1}^{[m-1]}(x)\right]^{'}= p_{n}^{[m-1]}(x),\quad\mbox{ if } \quad x\in \RR\setminus\ZZ.
\end{equation}

Also, the periodic generalized Bernoulli functions of level $m$
are  integrable function on  $[0,1]$. Therefore, they satisfy  Dirichlet conditions for the existence of their Fourier expansions
\cite{Foll,St}.

\medskip

For a fixed $m\in\NN$ we note that $p_{1}^{[m-1]}(x)$ has the following Fourier coefficients:
\begin{eqnarray*}
a_{0,1}^{[m-1]}&=& 2\int_{0}^{1}p_{1}^{[m-1]}(x)\,dx= \frac{m!}{2}\left(\frac{m-1}{m+1}\right), \\
a_{k,1}^{[m-1]}&=& 2\int_{0}^{1}p_{1}^{[m-1]}(x)\cos(2\pi k x)\,dx =0, \quad k\geq 1,\\
b_{k,1}^{[m-1]}&=& 2\int_{0}^{1}p_{1}^{[m-1]}(x)\sin(2\pi k x)\,dx=-\frac{2m!}{2\pi k}, \quad k\geq 1.
\end{eqnarray*}
Thus, $p_{1}^{[m-1]}(x)$ has the Fourier expansion
\begin{equation}
\label{peri6}
p_{1}^{[m-1]}(x)\sim \frac{m!}{2}\left(\frac{m-1}{m+1}\right) - \sum_{k=1}^{\infty}\frac{2m!\sin (2\pi k x)}{2\pi k}.
\end{equation}

For $x\in (0,1)$, let us integrate the series \eqref{peri6} formally, term by term:
\begin{eqnarray}
\nonumber
\int_{0}^{x} p_{1}^{[m-1]}(t)\,dt&=& \frac{m!}{2}\left(\frac{m-1}{m+1}\right)x- \sum_{k=1}^{\infty}\frac{2m!}{2\pi k}\int_{0}^{x}\sin(2\pi kt)\,dt \\
\nonumber
 &=& \frac{m!}{2}\left(\frac{m-1}{m+1}\right)x-\sum_{k=1}^{\infty}
\frac{2m!}{(2\pi k)^{2}}(1-\cos(2\pi kx))\\
 &=&
\label{peri7}
\frac{m!}{2}\left(\frac{m-1}{m+1}\right)x -\frac{m!}{2\pi^{2}}\zeta(2)+ \sum_{k=1}^{\infty}
\frac{2m! \cos(2\pi kx)}{(2\pi k)^{2}}.
\end{eqnarray}
From \eqref{[Sec2]-Intf1} we have
\begin{equation}
\label{peri8}
\int_{0}^{x} p_{1}^{[m-1]}(t)\,dt=  p_{2}^{[m-1]}(x)- \frac{B_{2}^{[m-1]}}{2}.
\end{equation}

Hence, the substitution of \eqref{peri8} into \eqref{peri7} yields the following expansion for $p_{2}^{[m-1]}(x)$
\begin{equation}
\label{peri9}
p_{2}^{[m-1]}(x)= \frac{B_{2}^{[m-1]}}{2}+\frac{m!}{2}\left(\frac{m-1}{m+1}\right)x -\frac{m!}{2\pi^{2}}\zeta(2)+ \sum_{k=1}^{\infty}
\frac{2m! \cos(2\pi kx)}{(2\pi k)^{2}}.
\end{equation}

Since, $p_{2}^{[m-1]}(x)$ has the following Fourier coefficients:
\begin{eqnarray*}
a_{0,2}^{[m-1]}&=& 2\int_{0}^{1}p_{1}^{[m-1]}(x)\,dx= \frac{m!}{3}\left(\frac{m-1}{(m+1)^{2}}\right)\left(\frac{m^{2}+2m-2}{m+2}\right), \\
a_{k,2}^{[m-1]}&=& 2\int_{0}^{1}p_{2}^{[m-1]}(x)\cos(2\pi k x)\,dx =\frac{2m!}{(2\pi k)^{2}}, \quad k\geq 1,\\
b_{k,2}^{[m-1]}&=& 2\int_{0}^{1}p_{1}^{[m-1]}(x)\sin(2\pi k x)\,dx=-\frac{m!}{2\pi k}\left(\frac{m-1}{m+1}\right), \quad k\geq 1,
\end{eqnarray*}
then $p_{2}^{[m-1]}(x)$ has the Fourier expansion

\small{
\begin{equation}
\label{peri10}
p_{2}^{[m-1]}(x)= \frac{m!}{6}\left(\frac{m-1}{(m+1)^{2}}\right)\left(\frac{m^{2}+2m-2}{m+2}\right)
+\sum_{k=1}^{\infty}\frac{2m!\cos(2\pi kx)}{(2\pi k)^{2}}
-\sum_{k=1}^{\infty} \frac{m!(m-1)\sin(2\pi kx)}{2\pi k(m+1)}.
\end{equation}}

On comparing \eqref{peri9} and \eqref{peri10}, for $x\in (0,1)$ we see that

\begin{equation}
\label{peri11}
\frac{m!}{2\pi^{2}}\zeta(2)-\frac{B_{2}^{[m-1]}}{2}+\frac{m!}{6}\left(\frac{m-1}{(m+1)^{2}}\right)\left(\frac{m^{2}+2m-2}{m+2}\right)
=\frac{m!}{2}\left(\frac{m-1}{m+1}\right)x+\sum_{k=1}^{\infty} \frac{m!(m-1)\sin(2\pi kx)}{2\pi k(m+1)}.
\end{equation}

If we put $x=\frac{1}{2}$ in \eqref{peri11}, then we obtain
\begin{equation}
\label{peri12}
\zeta(2)=\frac{2\pi^{2}}{m!}\left[\frac{B_{2}^{[m-1]}}{2}
+\frac{m!}{4}\left(\frac{m-1}{m+1}\right)
-\frac{m!}{6}\left(\frac{m-1}{(m+1)^{2}}\right)\left(\frac{m^{2}+2m-2}{m+2}\right)
\right].
\end{equation}

The relation \eqref{peri12} connects the zeta number $\zeta(2)$ with the generalized Bernoulli polynomial $B_{2}^{[m-1]}(x)$ for any $m>1$. Notice that if $m=1$ then \eqref{peri12} coincides with Euler's relation \eqref{equa1} for $k=1$.

\medskip

For example, if we take $m=2$ then \eqref{peri12} becomes
\begin{equation*}
\zeta(2)=\pi^{2}\left( \frac{B_{2}^{[1]}}{2}+\frac{1}{6}-\frac{1}{18}\right)=\frac{\pi^{2}}{6}.
\end{equation*}

Since on $[0,1]$, the polynomial $p_{n}(x)$ is symmetric about the midpoint $x =\frac{1}{2}$, when $n$ is even, and it is antisymmetric about $x =\frac{1}{2}$, when $n$ is odd; that is,
\begin{equation}
\label{peri13}
p_{n}(1-x)=(-1)^{n}p_{n}(x), \quad 0\leq x\leq 1, \quad n\geq 2.
\end{equation}

It follows that when $m=1$,  taking $x=0$ in \eqref{peri3} and evaluating $p_{2r}(0)$ from \eqref{peri1} and using \eqref{peri13}, we obtain (cf. \cite[Eq. (3.54)]{Ph}):

\begin{equation*}
\label{peri14}
\zeta(2r)=\sum_{n=1}^{\infty}\frac{1}{n^{2r}}=(-1)^{r-1}\pi^{2r}2^{2r-1}
\frac{B_{2r}}{(2r)!}, \quad r\geq 1,
\end{equation*}
this last equation is precisely \eqref{equa1}.

\medskip

Next,  we will use the notation $p_{n}^{[m-1]}(x^{-})$ and $p_{n}^{[m-1]}(x^{+})$  for representing the one-sided limits
$\lim_{y\to x^{-}}p_{n}^{[m-1]}(y)$ and $\lim_{y\to x^{+}}p_{n}^{[m-1]}(y)$, respectively. The following Proposition provides the Fourier expansion for $p_{n}^{[m-1]}(x)$ when $m>1$.

\begin{proposition}
\label{FourierFPBG}
For a fixed $m\in\NN$ and any $n\in\NN$, let $p_{n}^{[m-1]}(x)$  be the periodic generalized Bernoulli functions of level $m$. Then Fourier expansion for $p_{n}^{[m-1]}(x)$ on $[0,1]$ is given by
\begin{equation}
\label{Fourier1}
p_{n}^{[m-1]}(x) \sim \frac{a_{0,n}^{[m-1]}}{2} + \sum_{k=1}^{\infty} a_{k,n}^{[m-1]}\cos(2\pi kx)   + b_{k,n}^{[m-1]} \sin(2\pi kx),
\end{equation}
where
\begin{equation}
\label{Fourier2}
\frac{a_{0,n}^{[m-1]}}{2} = p_{n+1}^{[m-1]}(1^{-})-p_{n+1}^{[m-1]}(0) = \frac{1}{(n+1)!}\left(B_{n+1}^{[m-1]}(1)- B_{n+1}^{[m-1]}\right).
\end{equation}
And for $k\geq 1:$
\begin{eqnarray}
\label{Fourier3}
a_{k,n}^{[m-1]} & = &\sum_{j=0}^{\lfloor \frac{n}{2} \rfloor -1} (-1)^{j}\frac{2}{(2\pi k)^{2j+2}}\left(p_{n-2j-1}^{[m-1]}(1^{-}) - p_{n-2j-1}^{[m-1]}(0)\right)\\
\label{Fouriers3}
 & = &\sum_{j=0}^{\lfloor \frac{n}{2} \rfloor -1} (-1)^{j}\frac{2}{(2\pi k)^{2j+2}}\frac{\left(B_{n-2j-1}^{[m-1]}(1)- B_{n-2j-1}^{[m-1]}\right)}{(n-2j-1)!},\\
\label{Fourier4}
b_{k,n}^{[m-1]} & = &\sum_{j=0}^{\lfloor \frac{n}{2} \rfloor} (-1)^{j+1}\frac{2}{(2\pi k)^{2j+1}}\left(p_{n-2j}^{[m-1]}(1^{-}) - p_{n-2j}^{[m-1]}(0)\right)\\
\label{Fouriers4}
& = &\sum_{j=0}^{\lfloor \frac{n}{2} \rfloor} (-1)^{j+1}\frac{2}{(2\pi k)^{2j+1}}\frac{\left(B_{n-2j}^{[m-1]}(1)- B_{n-2j}^{[m-1]}\right)}{(n-2j)!}.
\end{eqnarray}
\end{proposition}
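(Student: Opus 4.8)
The plan is to compute the Euler--Fourier coefficients $a_{k,n}^{[m-1]}=2\int_{0}^{1}p_n^{[m-1]}(x)\cos(2\pi kx)\,dx$ and $b_{k,n}^{[m-1]}=2\int_{0}^{1}p_n^{[m-1]}(x)\sin(2\pi kx)\,dx$ directly from the definition \eqref{peri1}, using the derivative rule \eqref{DervFPB} to set up a recursion in the degree $n$. First I would dispose of the constant term: since $p_{n+1}^{[m-1]}$ restricted to $(0,1)$ is the polynomial $B_{n+1}^{[m-1]}(x)/(n+1)!$ and has $p_n^{[m-1]}$ as its derivative there, the fundamental theorem of calculus gives $\tfrac12 a_{0,n}^{[m-1]}=\int_{0}^{1}p_n^{[m-1]}(x)\,dx=p_{n+1}^{[m-1]}(1^-)-p_{n+1}^{[m-1]}(0)$, and reading these one-sided values off the polynomial yields \eqref{Fourier2}.

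For $k\ge 1$ the engine is a single integration by parts on $(0,1)$, where $p_n^{[m-1]}$ is smooth. Integrating $p_n^{[m-1]}(x)\cos(2\pi kx)$ by parts kills the boundary term, since $\sin(2\pi kx)$ vanishes at both endpoints, and by \eqref{DervFPB} it leaves $a_{k,n}^{[m-1]}=-\tfrac{1}{2\pi k}\,b_{k,n-1}^{[m-1]}$. Integrating $p_n^{[m-1]}(x)\sin(2\pi kx)$ by parts instead produces a nonvanishing boundary contribution, because $\cos(2\pi kx)=1$ at $x=0,1$ while $p_n^{[m-1]}$ jumps there; this gives $b_{k,n}^{[m-1]}=-\tfrac{2}{2\pi k}\big(p_n^{[m-1]}(1^-)-p_n^{[m-1]}(0)\big)+\tfrac{1}{2\pi k}\,a_{k,n-1}^{[m-1]}$. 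Here I must be careful that $p_n^{[m-1]}$ is only piecewise smooth, being differentiable merely on $\RR\setminus\ZZ$, so the integration by parts must be run on $[0,1]$ with the endpoint values read as one-sided limits; this is precisely the origin of the jump term $p_n^{[m-1]}(1^-)-p_n^{[m-1]}(0)$.

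Eliminating one sequence in favor of the other decouples these into two-step recursions, namely $a_{k,n}^{[m-1]}=\tfrac{2}{(2\pi k)^2}\big(p_{n-1}^{[m-1]}(1^-)-p_{n-1}^{[m-1]}(0)\big)-\tfrac{1}{(2\pi k)^2}\,a_{k,n-2}^{[m-1]}$ and the analogous relation for $b$. Unrolling by induction on $n$ produces the alternating geometric sums \eqref{Fourier3} and \eqref{Fourier4}; the base data are $a_{k,0}^{[m-1]}=a_{k,1}^{[m-1]}=0$ together with $b_{k,0}^{[m-1]}=0$ and $b_{k,1}^{[m-1]}=-2m!/(2\pi k)$, the latter two read off from the coefficients of $p_1^{[m-1]}$ computed just before \eqref{peri6}.

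I expect the main obstacle to be the bookkeeping at the ends of the recursion. One must check that the upper limit is $\lfloor \tfrac{n}{2}\rfloor-1$ for $a$, the recursion always terminating in the vanishing $a_{k,0}^{[m-1]}$ or $a_{k,1}^{[m-1]}$ according to the parity of $n$, whereas for $b$ it is $\lfloor \tfrac{n}{2}\rfloor$. The extra top summand for $b$ behaves differently in the two parities: for odd $n$ the recursion ends at the nonzero $b_{k,1}^{[m-1]}$, but since $p_1^{[m-1]}(1^-)-p_1^{[m-1]}(0)=m!$ one has $b_{k,1}^{[m-1]}=-\tfrac{2}{2\pi k}\big(p_1^{[m-1]}(1^-)-p_1^{[m-1]}(0)\big)$, exactly the value the general term predicts, so it is absorbed as the $j=\lfloor \tfrac{n}{2}\rfloor$ summand; for even $n$ the $j=\lfloor \tfrac{n}{2}\rfloor$ summand carries $p_0^{[m-1]}(1^-)-p_0^{[m-1]}(0)=0$ and vanishes automatically. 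Confirming this parity split, and tracking the sign pattern $(-1)^j$ against $(-1)^{j+1}$ through the unrolling, is the only delicate point. Finally, rewriting each $p_\ell^{[m-1]}$ through \eqref{peri1} converts \eqref{Fourier3} and \eqref{Fourier4} into the $B$-forms \eqref{Fouriers3} and \eqref{Fouriers4}, which completes the proof.
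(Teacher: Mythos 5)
Your proposal is correct and follows essentially the same route as the paper's proof: the constant term via the fundamental theorem of calculus and \eqref{DervFPB}, a single integration by parts yielding the coupled relations $a_{k,n}^{[m-1]}=-\tfrac{1}{2\pi k}b_{k,n-1}^{[m-1]}$ and $b_{k,n}^{[m-1]}=-\tfrac{2}{2\pi k}\bigl(p_{n}^{[m-1]}(1^{-})-p_{n}^{[m-1]}(0)\bigr)+\tfrac{1}{2\pi k}a_{k,n-1}^{[m-1]}$, elimination into two-step recursions, and unrolling. Your explicit treatment of the base cases $a_{k,0}^{[m-1]}=a_{k,1}^{[m-1]}=b_{k,0}^{[m-1]}=0$, $b_{k,1}^{[m-1]}=-2m!/(2\pi k)$ and of the parity of $n$ at the end of the recursion is a welcome refinement of the bookkeeping the paper leaves implicit.
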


\begin{proof}
For each $p_{n}^{[m-1]}(x)$ it is well known that its Fourier coefficients are  given by
\begin{eqnarray}
\label{aux1}
a_{0,n}^{[m-1]}&=& 2\int_{0}^{1}p_{n}^{[m-1]}(x)\,dx,\\
\label{aux2}
a_{k,n}^{[m-1]}&=& 2\int_{0}^{1}p_{n}^{[m-1]}(x)\cos(2\pi kx)\,dx, \quad k\geq 1,\\
\label{aux3}
b_{k,n}^{[m-1]}&=& 2\int_{0}^{1}p_{n}^{[m-1]}(x)\sin(2\pi kx)\,dx, \quad k\geq 1.
\end{eqnarray}

Then, \eqref{Fourier2} is a straightforward consequence of  \eqref{aux1} and \eqref{DervFPB}. For obtaining the relations \eqref{Fourier3} and \eqref{Fourier4} it suffices use integration by parts on the right-hand side of \eqref{aux2} and \eqref{aux3}, respectively. So, we get
\begin{eqnarray}
\label{aux4}
a_{k,n}^{[m-1]}&=&-\frac{1}{2\pi k}b_{k,n-1}^{[m-1]},\\
\label{aux5}
b_{k,n}^{[m-1]}&=&-\frac{2}{2\pi k}\left(p_{n}^{[m-1]}(1^{-}) - p_{n}^{[m-1]}(0)\right)+\frac{1}{2\pi k}a_{k,n-1}^{[m-1]}.
\end{eqnarray}

Then replacing $n$ by $n-1$ in \eqref{aux5} and substituting the result obtained into  \eqref{aux4}, we get the following recurrence relation
\begin{equation}
\label{aux6}
a_{k,n}^{[m-1]}+\frac{1}{(2\pi k)^{2}}a_{k,n-2}^{[m-1]}= \frac{2}{(2\pi k)^{2}}\left(p_{n-1}^{[m-1]}(1^{-}) - p_{n-1}^{[m-1]}(0)\right).
\end{equation}

Analogously, we can obtain
\begin{equation}
\label{aux7}
b_{k,n}^{[m-1]}+\frac{1}{(2\pi k)^{2}}b_{k,n-2}^{[m-1]}= -\frac{2}{2\pi k}\left(p_{n}^{[m-1]}(1^{-}) - p_{n}^{[m-1]}(0)\right).
\end{equation}

Finally, it follows  from \eqref{aux6} and \eqref{aux7} that
\begin{eqnarray*}
a_{k,n}^{[m-1]}&=& \frac{2}{(2\pi k)^{2}}\left(p_{n-1}^{[m-1]}(1^{-}) - p_{n-1}^{[m-1]}(0)\right)-\frac{2}{(2\pi k)^{4}}\left(p_{n-3}^{[m-1]}(1^{-}) - p_{n-3}^{[m-1]}(0)\right)\\
& &+ \frac{2}{(2\pi k)^{6}}\left(p_{n-5}^{[m-1]}(1^{-}) - p_{n-5}^{[m-1]}(0)\right)-\frac{2}{(2\pi k)^{8}}\left(p_{n-7}^{[m-1]}(1^{-}) - p_{n-7}^{[m-1]}(0)\right)\\
& &\vdots \\
& &+ (-1)^{\lfloor \frac{n}{2} \rfloor -1}\frac{2}{(2\pi k)^{2\lfloor \frac{n}{2} \rfloor}}\left(p_{n-\left(\lfloor \frac{n}{2} \rfloor -1\right)}^{[m-1]}(1^{-}) - p_{n-\left(\lfloor \frac{n}{2} \rfloor -1\right)}^{[m-1]}(0)\right),
\end{eqnarray*}
and
\begin{eqnarray*}
b_{k,n}^{[m-1]}&=& -\frac{2}{2\pi k}\left(p_{n}^{[m-1]}(1^{-}) - p_{n}^{[m-1]}(0)\right)+\frac{2}{(2\pi k)^{3}}\left(p_{n-2}^{[m-1]}(1^{-}) - p_{n-2}^{[m-1]}(0)\right)\\
& & -\frac{2}{(2\pi k)^{5}}\left(p_{n-4}^{[m-1]}(1^{-}) - p_{n-4}^{[m-1]}(0)\right)+\frac{2}{(2\pi k)^{7}}\left(p_{n-6}^{[m-1]}(1^{-}) - p_{n-6}^{[m-1]}(0)\right)\\
& &\vdots \\
& &+ (-1)^{\lfloor \frac{n}{2} \rfloor + 1}\frac{2}{(2\pi k)^{2\lfloor \frac{n}{2} \rfloor + 1}}\left(p_{n-2\lfloor \frac{n}{2} \rfloor}^{[m-1]}(1^{-}) - p_{n-2\lfloor \frac{n}{2} \rfloor}^{[m-1]}(0)\right).
\end{eqnarray*}

From these last relations we obtain \eqref{Fouriers3} and \eqref{Fouriers4}, respectively.

\end{proof}

\begin{theorem}
\label{pzeta1}
For a fixed $m\in \NN$ and $n\in\NN$,  let $p_{n}^{[m-1]}(x)$  be the periodic generalized Bernoulli functions of level $m$. If $x\in (0,1)$, then  the following identity holds.
\begin{eqnarray}
\nonumber
p_{n}^{[m-1]}(x)&=&\frac{m!(m-1)}{2(m+1)}\frac{x^{n}}{n!} \\
\nonumber
& & +\sum_{k=1}^{\lfloor \frac{n}{2} \rfloor}{n\choose 2k}\left[(2k)!p_{2k}^{[m-1]}(0)+ \frac{2(-1)^{k}m!(2k)!\zeta(2k)}{(2\pi)^{2k}}\right]\frac{x^{n-2k}}{n!}\\
\label{relazber1}
& & +\sum_{k=1}^{\lfloor \frac{n-1}{2} \rfloor}p_{2k-1}^{[m-1]}(0)\frac{x^{n-2k-1}}{(n-2k-1)!} + m!p_{n}(x).
\end{eqnarray}
\end{theorem}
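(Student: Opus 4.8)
The plan is to reduce the statement to a finite polynomial identity on $(0,1)$ and then to read off the three groups of terms by sorting the summation formula according to the parity of the summation index. Fix $x\in(0,1)$. On this interval the periodic functions coincide with the polynomials defining them through \eqref{peri1}, so $p_{n}^{[m-1]}(x)=B_{n}^{[m-1]}(x)/n!$ and $m!\,p_{n}(x)=m!\,B_{n}(x)/n!$, where $B_{n}(x)=B_{n}^{[0]}(x)$ is the classical Bernoulli polynomial. Applying the summation formula \eqref{[Sec2]-bsf1} to both (the classical case being the instance $m=1$) and recalling $p_{j}^{[m-1]}(0)=B_{j}^{[m-1]}/j!$, I would write
\begin{equation*}
p_{n}^{[m-1]}(x)=\sum_{j=0}^{n}p_{j}^{[m-1]}(0)\frac{x^{n-j}}{(n-j)!},\qquad m!\,p_{n}(x)=m!\sum_{j=0}^{n}\frac{B_{j}}{j!}\frac{x^{n-j}}{(n-j)!}.
\end{equation*}
Subtracting these, the leading ($j=0$) terms cancel since $B_{0}^{[m-1]}=m!=m!\,B_{0}$; hence $p_{n}^{[m-1]}(x)-m!\,p_{n}(x)$ is, on $(0,1)$, a polynomial of degree at most $n-1$, and the whole theorem amounts to identifying this polynomial through the pairing of each index $j$ with its companion power $x^{n-j}/(n-j)!$.

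The heart of the argument is then to split $\sum_{j=1}^{n}\bigl(p_{j}^{[m-1]}(0)-m!\,B_{j}/j!\bigr)x^{n-j}/(n-j)!$ according to the parity of $j$. The single index $j=1$, with $B_{1}^{[m-1]}=-m!/(m+1)$ and $B_{1}=-1/2$, produces the first summand of \eqref{relazber1}. The even indices $j=2k$ with $1\le k\le\lfloor n/2\rfloor$ collect into the middle sum, each contributing $\binom{n}{2k}\bigl(B_{2k}^{[m-1]}-m!\,B_{2k}\bigr)x^{n-2k}/n!$, where I use $(2k)!\,p_{2k}^{[m-1]}(0)=B_{2k}^{[m-1]}$. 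Finally, for the odd indices $j\ge 3$ the classical Bernoulli numbers vanish, $B_{j}=0$, so these contribute only the generalized values $p_{j}^{[m-1]}(0)\,x^{n-j}/(n-j)!$, which form the last sum of \eqref{relazber1}.

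The one genuine step, and the one needing care, is to recast the even contributions as zeta values by substituting Euler's relation \eqref{equa1} into $-m!\,B_{2k}$. A short computation, using $(2\pi)^{2k}=2^{2k}\pi^{2k}$ and $1/(-1)^{k-1}=(-1)^{k-1}$, yields
\begin{equation*}
-m!\,B_{2k}=\frac{2(-1)^{k}m!\,(2k)!\,\zeta(2k)}{(2\pi)^{2k}},
\end{equation*}
so each bracket in the middle sum equals $B_{2k}^{[m-1]}-m!\,B_{2k}$, exactly as needed. The main obstacle is thus purely the sign- and factorial-bookkeeping of this conversion, together with correctly isolating the $j=1$ term and matching every index $j$ to the power $x^{n-j}/(n-j)!$; no analytic input beyond \eqref{[Sec2]-bsf1} and \eqref{equa1} is required once the reduction to $(0,1)$ is in place. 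As an independent check, consistent with the viewpoint of this section, one may instead begin from the Fourier expansion of Proposition \ref{FourierFPBG}, substitute the classical expansions \eqref{peri3}--\eqref{peri4} for the inner sums $\sum_{k}\cos(2\pi kx)/(2\pi k)^{2r}$ and $\sum_{k}\sin(2\pi kx)/(2\pi k)^{2r+1}$, and resum; this recovers the same polynomial, with the constants $\sum_{k}(2\pi k)^{-2r}=\zeta(2r)/(2\pi)^{2r}$ supplying the zeta values directly.
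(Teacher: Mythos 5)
Your argument is correct, and it takes a genuinely different route from the paper's. The paper proves Theorem \ref{pzeta1} inductively: it starts from the Fourier expansion \eqref{teo1} of $p_{1}^{[m-1]}$ on $(0,1)$ and repeatedly applies $p_{n}^{[m-1]}(x)=p_{n}^{[m-1]}(0)+\int_{0}^{x}p_{n-1}^{[m-1]}(t)\,dt$, integrating the trigonometric series term by term so that the constants of integration $\sum_{k\geq 1}2/(2\pi k)^{2j}$ supply the zeta values and the surviving trigonometric tail reassembles into $m!\,p_{n}(x)$ via \eqref{peri2}--\eqref{peri4} (only $n=2,3$ are displayed, followed by ``iterating this procedure''). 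You instead note that on $(0,1)$ the claim is a finite polynomial identity: subtract the two summation formulas \eqref{[Sec2]-bsf1} (level $m$ and level $1$), sort by the parity of the index $j$, use $B_{j}=0$ for odd $j\geq 3$, and convert $-m!\,B_{2k}$ into the zeta expression by Euler's relation \eqref{equa1}. Your route is shorter, needs no Fourier analysis or term-by-term integration, and makes transparent that $\zeta(2k)$ enters only through the combination $2(-1)^{k}m!(2k)!\zeta(2k)/(2\pi)^{2k}=-m!\,B_{2k}$; the trade-off is that you import \eqref{equa1} as an external input, whereas the paper's Fourier machinery generates the zeta values internally and stays consistent with the viewpoint of Section \ref{[Section-3]-Basic}.

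One thing you must make explicit: the terms your computation produces do not literally match the display \eqref{relazber1}. Your $j=1$ contribution is $\frac{m!(m-1)}{2(m+1)}\frac{x^{n-1}}{(n-1)!}$, not $\frac{m!(m-1)}{2(m+1)}\frac{x^{n}}{n!}$, and your odd contributions are $p_{2k+1}^{[m-1]}(0)\frac{x^{n-2k-1}}{(n-2k-1)!}$, not $p_{2k-1}^{[m-1]}(0)\frac{x^{n-2k-1}}{(n-2k-1)!}$. These are typographical errors in the printed statement --- the paper's own worked cases $n=2$ and $n=3$ inside its proof agree with your versions, as does the base case $n=1$, which fails against the formula as printed --- but you assert the match without comment. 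State the corrected exponents and indices rather than silently identifying your terms with the displayed ones.
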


\begin{proof}
Using Proposition \ref{FourierFPBG} we obtain the following expression for $p_{1}^{[m-1]}(x)$:

\begin{equation}
\label{teo1}
p_{1}^{[m-1]}(x)=\frac{m!(m-1)}{2(m+1)}-m!\sum_{k=1}^{\infty} \frac{2\sin 2\pi kx}{2\pi k}, \quad \mbox{ whenever } \quad x\in(0,1).
\end{equation}

Then in view of \eqref{peri1} and \eqref{[Sec2]-Intf3}, we see that
\begin{equation}
\label{teo2}
p_{n}^{[m-1]}(x)= p_{n}^{[m-1]}(0) + \int_{0}^{x} p_{n-1}^{[m-1]}(t)\,dt,\quad \mbox{ if } \quad x\in[0,1).
\end{equation}

Taking $n=2$ and substituting  \eqref{teo1} into \eqref{teo2}, we get
\begin{eqnarray*}
\begin{split}
p_{2}^{[m-1]}(x)&= p_{2}^{[m-1]}(0) + \int_{0}^{x} p_{1}^{[m-1]}(t)\,dt \\
&= \frac{m!(m-1)}{2(m+1)}x +\left(2!p_{2}^{[m-1]}(0)-\frac{2m!2!}{(2\pi)^{2}}\zeta(2)\right)\frac{1}{2!} + m!p_{2}(x).
\end{split}
\end{eqnarray*}

Similarly, for $n=3$ we can deduce
\begin{eqnarray*}
p_{3}^{[m-1]}(x)&=& p_{3}^{[m-1]}(0) + \int_{0}^{x} p_{2}^{[m-1]}(t)\,dt \\
&=& \frac{m!(m-1)}{2(m+1)}\frac{x^{2}}{2!} + \left(2!p_{2}^{[m-1]}(0)-\frac{2m!2!}{(2\pi)^{2}}\zeta(2)\right)\frac{1}{2!}x + p_{3}^{[m-1]}(0)\\
& &+\,m!p_{3}(x).
\end{eqnarray*}

Iterating this procedure \eqref{relazber1}  follows.

\end{proof}

Recall that the Dirichlet convergence theorem \cite{Foll,Ph,St}
guarantees that the Fourier series \eqref{Fourier1} converges pointwise at $x\in\ZZ$ to the  average of $p_{n}^{[m-1]}(x^{+})$ and $p_{n}^{[m-1]}(x^{-})$. Indeed, based on this fact we prove the next result.

\begin{theorem}
\label{htyq1}
For a fixed $m\in\NN$ and any $r\in\NN$, the following identity holds.
\begin{eqnarray}
\label{eulergen}
\zeta(2r)=\frac{(-1)^{r-1}2^{2r-1} \pi ^{2r}B_{2r}^{[m-1]}}{m!(2r)!}+\Delta_{r}^{[m-1]},
\end{eqnarray}
where
\begin{eqnarray}
\nonumber
\Delta_{r}^{[m-1]}&=&\frac{(-1)^{r-1}2^{2r-1} \pi ^{2r}}{m!}\left[\frac{B_{2r}^{[m-1]}(1) - B_{2r}^{[m-1]}}{2(2r)!}-  \frac{B_{2r+1}^{[m-1]}(1)- B_{2r+1}^{[m-1]}}{(2r+1)!}\right. \\
\label{eulergenn1}
& &-\left. \sum_{j=1}^{r-1}\frac{\left(B_{2r-2j+1}^{[m-1]}(1)- B_{2r-2j+1}^{[m-1]}\right)}{(2r-2j+1)!}\frac{B_{2j}}{(2j)!}\right]\\
\nonumber
&=&\frac{(-1)^{r-1}2^{2r-1} \pi ^{2r}}{m!}\left[\frac{1}{2(2r)!}\sum_{k=0}^{2r-1}\binom{2r}{k} B_{k}^{[m-1]}-\frac{1}{(2r+1)!}\sum_{k=0}^{2r}
\binom{2r+1}{k} B_{k}^{[m-1]}\right.\\
\label{eulergen1}
& &-\left.\sum_{j=1}^{r-1}\sum_{k=0}^{2j}\binom{2j+1}{k}
\frac{B_{k}^{[m-1]}B_{2j}}{(2j+1)!(2j)!}\right].
\end{eqnarray}
\end{theorem}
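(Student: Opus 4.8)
The plan is to evaluate the Fourier expansion \eqref{Fourier1} of $p_{2r}^{[m-1]}$ at the integer point $x=0$ and to invoke the Dirichlet convergence theorem recalled just above the statement. At $x=0$ every sine term vanishes and every cosine equals $1$, so the series collapses to $\tfrac{1}{2}a_{0,2r}^{[m-1]}+\sum_{k\geq1}a_{k,2r}^{[m-1]}$; by Dirichlet's theorem this equals the average of the one-sided limits $p_{2r}^{[m-1]}(0^{+})$ and $p_{2r}^{[m-1]}(0^{-})$. Using \eqref{peri1} together with periodicity I would compute $p_{2r}^{[m-1]}(0^{+})=B_{2r}^{[m-1]}/(2r)!$ and $p_{2r}^{[m-1]}(0^{-})=p_{2r}^{[m-1]}(1^{-})=B_{2r}^{[m-1]}(1)/(2r)!$, so the left-hand side becomes $\frac{1}{2(2r)!}\left(B_{2r}^{[m-1]}+B_{2r}^{[m-1]}(1)\right)$. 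For $m>1$ this is a genuine average across a jump, which is precisely why the generalized Bernoulli number $B_{2r}^{[m-1]}$ enters the final relation; by contrast, matching coefficients of a fixed power of $x$ in Theorem~\ref{pzeta1} only reproduces the classical identity \eqref{equa1}, so the integer-evaluation route is the essential one here.

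Next I would substitute the closed forms \eqref{Fourier2} and \eqref{Fouriers3} for the coefficients with $n=2r$. Because the $j$-sum in \eqref{Fouriers3} is finite and each associated $k$-series converges absolutely, I may interchange summations and replace $\sum_{k\geq1}(2\pi k)^{-2j-2}$ by $(2\pi)^{-2j-2}\zeta(2j+2)$, which gives
\begin{equation*}
\frac{B_{2r}^{[m-1]}+B_{2r}^{[m-1]}(1)}{2(2r)!}=\frac{B_{2r+1}^{[m-1]}(1)-B_{2r+1}^{[m-1]}}{(2r+1)!}+\sum_{j=0}^{r-1}(-1)^{j}\frac{2\,\zeta(2j+2)}{(2\pi)^{2j+2}}\,\frac{B_{2r-2j-1}^{[m-1]}(1)-B_{2r-2j-1}^{[m-1]}}{(2r-2j-1)!}.
\end{equation*}
The term $j=r-1$ is the only one carrying $\zeta(2r)$, and from the explicit first-degree polynomial (equivalently, from \eqref{[Sec2]-bsf1}) one has $B_{1}^{[m-1]}(1)-B_{1}^{[m-1]}=m!$, so that term equals $(-1)^{r-1}2m!\,\zeta(2r)/(2\pi)^{2r}$. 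Isolating it and multiplying through by $(-1)^{r-1}(2\pi)^{2r}/(2m!)=(-1)^{r-1}2^{2r-1}\pi^{2r}/m!$ solves for $\zeta(2r)$ up to the lower-order contributions $j=0,\dots,r-2$.

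Finally I would remove those lower-order contributions using classical Euler \eqref{equa1}. Written with $k=j+1$, \eqref{equa1} yields the crucial simplification $(-1)^{j}\,2\,\zeta(2j+2)/(2\pi)^{2j+2}=B_{2j+2}/(2j+2)!$, which turns each remaining even zeta value into a classical Bernoulli number; re-indexing $i=j+1$ then converts $\sum_{j=0}^{r-2}$ into exactly the sum $\sum_{i=1}^{r-1}\frac{B_{2i}}{(2i)!}\,\frac{B_{2r-2i+1}^{[m-1]}(1)-B_{2r-2i+1}^{[m-1]}}{(2r-2i+1)!}$ appearing in \eqref{eulergenn1}. Splitting $\frac{1}{2(2r)!}\left(B_{2r}^{[m-1]}+B_{2r}^{[m-1]}(1)\right)=\frac{B_{2r}^{[m-1]}}{(2r)!}+\frac{B_{2r}^{[m-1]}(1)-B_{2r}^{[m-1]}}{2(2r)!}$ peels off the main term $(-1)^{r-1}2^{2r-1}\pi^{2r}B_{2r}^{[m-1]}/\big(m!(2r)!\big)$ and leaves precisely $\Delta_{r}^{[m-1]}$ as in \eqref{eulergenn1}. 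Substituting the summation formula \eqref{[Sec2]-bsf1} at $x=1$, namely $B_{n}^{[m-1]}(1)-B_{n}^{[m-1]}=\sum_{k=0}^{n-1}\binom{n}{k}B_{k}^{[m-1]}$, then converts the differences into the finite sums of \eqref{eulergen1}.

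The hard part will be the bookkeeping rather than any single deep idea. First, one must carefully justify pointwise evaluation of the Fourier series at the integer jump (Dirichlet's theorem, giving the average of one-sided limits) and the interchange of the finite $j$-sum with the infinite $k$-sum that manufactures the $\zeta(2j+2)$ factors. Second, and more delicately, one must track all signs and all powers of $2$ and $\pi$ through the isolation of the unique $\zeta(2r)$ term and through the substitution of \eqref{equa1}, and verify that no further $\zeta(2r)$ is hidden among the remaining terms, so that the identity $(-1)^{j}2\zeta(2j+2)/(2\pi)^{2j+2}=B_{2j+2}/(2j+2)!$ really collapses the entire lower-order tail into the stated double sum.
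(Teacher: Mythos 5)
Your proposal is correct and follows essentially the same route as the paper: evaluate the Fourier expansion of $p_{2r}^{[m-1]}$ at the jump $x=0$ via the Dirichlet convergence theorem, substitute the coefficient formulas \eqref{Fourier2} and \eqref{Fouriers3}, isolate the unique $\zeta(2r)$ term coming from $j=r-1$ (using $B_{1}^{[m-1]}(1)-B_{1}^{[m-1]}=m!$), collapse the remaining even zeta values into classical Bernoulli numbers via \eqref{equa1}, and peel off the main term by splitting the averaged boundary values. Your sign check $(-1)^{j}\,2\,\zeta(2j+2)/(2\pi)^{2j+2}=B_{2j+2}/(2j+2)!$ is the correct form of the identity the paper invokes (the paper's intermediate line states it with $(-1)^{j+1}$, a typo), and the rest of your bookkeeping matches the published argument.
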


\begin{proof}
Let us consider $n=2r$ and $x=0$ in \eqref{Fourier1}. Since $x=0$ is a point of discontinuity of  $p_{2r}^{[m-1]}(x)$, by the Dirichlet convergence theorem \cite{Foll,Ph,St} we have

\begin{eqnarray}
\label{auxiliars}
\frac{p_{2r}^{[m-1]}(0^{+}) + p_{2r}^{[m-1]}(0^{-})}{2} = \frac{a_{0,2r}^{[m-1]}}{2} + \sum_{k=1}^{\infty} a_{k,2r}^{[m-1]}.
\end{eqnarray}

Since
$$\frac{p_{2r}^{[m-1]}(0^{+}) + p_{2r}^{[m-1]}(0^{-})}{2} = \frac{B_{2r}^{[m-1]} + B_{2r}^{[m-1]}(1)}{2(2r)!},$$
using  \eqref{Fourier2} and \eqref{Fouriers3}, we can rewrite \eqref{auxiliars} as follows

\begin{eqnarray}
\label{auxiliars2}
\nonumber
\frac{B_{2r}^{[m-1]} + B_{2r}^{[m-1]}(1)}{2(2r)!} &=& \frac{1}{(2r+1)!}\left(B_{2r+1}^{[m-1]}(1)- B_{2r+1}^{[m-1]}\right)\\
& &+ \sum_{k=1}^{\infty} \sum_{j=0}^{r-1} (-1)^{j}\frac{2}{(2\pi k)^{2j+2}}\frac{\left(B_{2r-2j-1}^{[m-1]}(1)- B_{2r-2j-1}^{[m-1]}\right)}{(2r-2j-1)!}.
\end{eqnarray}

Taking into account that $p_{2j+2}(0) = (-1)^{j+1}\sum_{n=1}^{\infty} \frac{2}{(2\pi n)^{2j+2}}$, the relation \eqref{auxiliars2} can be expressed as

\begin{eqnarray*}
\frac{B_{2r}^{[m-1]} + B_{2r}^{[m-1]}(1)}{2(2r)!} &=& \frac{1}{(2r+1)!}\left(B_{2r+1}^{[m-1]}(1)- B_{2r+1}^{[m-1]}\right)\\
& &+\sum_{j=0}^{r-2}\frac{\left(B_{2r-2j-1}^{[m-1]}(1)- B_{2r-2j-1}^{[m-1]}\right)}{(2r-2j-1)!}\frac{B_{2j+2}}{(2j+2)!}+m!p_{2r}(0).
\end{eqnarray*}

Or equivalently,
\begin{eqnarray}
\label{auxiliars4}
\nonumber
\frac{B_{2r}^{[m-1]} + B_{2r}^{[m-1]}(1)}{2(2r)!} &=& \frac{1}{(2r+1)!}\left(B_{2r+1}^{[m-1]}(1)- B_{2r+1}^{[m-1]}\right)\\
& &+\sum_{j=1}^{r-1}\frac{\left(B_{2r-2j+1}^{[m-1]}(1)- B_{2r-2j+1}^{[m-1]}\right)}{(2r-2j+1)!}\frac{B_{2j}}{(2j)!}+m!p_{2r}(0).
\end{eqnarray}

Now, from \eqref{auxiliars4} we deduce that
\begin{eqnarray}
\label{auxiliars5}
\nonumber
\frac{2(-1)^{r-1}\zeta(2r)}{(2 \pi)^{2r}}&=&\frac{1}{m!}\left[\frac{B_{2r}^{[m-1]}(1) + B_{2r}^{[m-1]}}{2(2r)!}-  \frac{B_{2r+1}^{[m-1]}(1)- B_{2r+1}^{[m-1]}}{(2r+1)!}\right. \\
& &-\left. \sum_{j=1}^{r-1}\frac{\left(B_{2r-2j+1}^{[m-1]}(1)- B_{2r-2j+1}^{[m-1]}\right)}{(2r-2j+1)!}\frac{B_{2j}}{(2j)!} \right].
\end{eqnarray}

Hence, \eqref{auxiliars5} takes the form:
\begin{eqnarray}
\label{auxiliars6}
\zeta(2r)&=&\frac{(-1)^{r-1}2^{2r-1} \pi ^{2r}B_{2r}^{[m-1]}}{m!(2r)!}+\Delta_{r}^{[m-1]},
\end{eqnarray}
where
\begin{eqnarray*}
\Delta_{r}^{[m-1]}&=&\frac{(-1)^{r-1}2^{2r-1} \pi ^{2r}}{m!}\left[
\frac{B_{2r}^{[m-1]}(1) + B_{2r}^{[m-1]}}{2(2r)!}-  \frac{B_{2r+1}^{[m-1]}(1)- B_{2r+1}^{[m-1]}}{(2r+1)!}\right. \\
& &-\left. \sum_{j=1}^{r-1}\frac{\left(B_{2r-2j+1}^{[m-1]}(1)- B_{2r-2j+1}^{[m-1]}\right)}{(2r-2j+1)!}\frac{B_{2j}}{(2j)!} \right].
\end{eqnarray*}
Hence,  $\Delta_{r}^{[m-1]}$ satisfies \eqref{eulergenn1}.

Finally, the substitution of \eqref{[Sec2]-bsf1} into
the above expression for  $\Delta_{r}^{[m-1]}$, and some suitable computations yield the  identity \eqref{eulergen1}.

\end{proof}

Notice that if  $m=1$ in \eqref{eulergen} then we recover \eqref{equa1}. It is not difficult to see that for $r=1$ the identity \eqref{eulergen} yields the same result than the identity  \eqref{peri12}.

\section{Riemann zeta function and quadrature formulae of Euler-Maclaurin type}
\label{[Section-4]-Basic}

It is well known that using the Euler-Maclaurin summation formula (cf. \cite{A,DR,L,W}, and \cite[Chap. 2, Sec. 3, p. 30]{N}) it is possible to deduce the following formula for the integral of the product of two classical Bernoulli polynomials

\begin{equation}
\label{euler1}
\int_{0}^{1}B_{s}(t)B_{r}(t)dt= (-1)^{s+1}\frac{s!r!}{(s+r)!}B_{s+r}, \quad r,s\geq 1.
\end{equation}

Using integration by parts a similar formula to \eqref{euler1} has been deduced in \cite{QRU}. More precisely, for an integer $r\geq 0$ and a closed interval $[a,b]$, let $C^{r}[a,b]$ denote the set of all $r$-times continuously differentiable functions defined on $[a,b]$. Then following result holds.

\begin{lemma}
\label{lem1} \cite[Lemma 1]{QRU} Let $r\geq 1$ and  $f\in C^{r}[0,1]$. For a fixed $m\in\NN$, we have
\begin{equation}
\label{euler2}
\int_{0}^{1}f(t)dt=\frac{1}{m!}\left[ \sum_{k=1}^{r} A_{k}^{[m-1]}(f)+ \frac{(-1)^{r}}{r!}\int_{0}^{1} f^{(r)}(t)B_{r}^{[m-1]}(t)dt\right],
\end{equation}
where
$$A_{k}^{[m-1]}(f)= \frac{(-1)^{k}}{k!}\left(f^{(k-1)}(0)B_{k}^{[m-1]}- f^{(k-1)}(1)B_{k}^{[m-1]}(1) \right), \quad k=1,\ldots,r.$$
\end{lemma}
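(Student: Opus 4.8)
The plan is to prove \eqref{euler2} by induction on $r$, the engine being repeated integration by parts combined with the Appell differential relation \eqref{[Sec2]-AppDer1}. The starting observation is that $B_{0}^{[m-1]}(t)=m!$, so that $\int_{0}^{1}f(t)\,dt=\frac{1}{m!}\int_{0}^{1}f(t)B_{0}^{[m-1]}(t)\,dt$, which already produces the normalizing factor $\frac{1}{m!}$ appearing on the right-hand side. The structural fact I would exploit throughout is that, by \eqref{[Sec2]-AppDer1} with $j=1$, each generalized Bernoulli polynomial of level $m$ is, up to a scalar, the derivative of the next one, namely $\left[B_{k}^{[m-1]}(t)\right]'=k\,B_{k-1}^{[m-1]}(t)$, equivalently $B_{k-1}^{[m-1]}(t)=\frac{1}{k}\left[B_{k}^{[m-1]}(t)\right]'$. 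This identity lets me trade a factor $B_{k-1}^{[m-1]}$ inside an integrand for a derivative $\left[B_{k}^{[m-1]}\right]'$ and thereby peel off one additional integration by parts at each step.

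For the base case $r=1$, I would write $B_{0}^{[m-1]}(t)=\left[B_{1}^{[m-1]}(t)\right]'$ and integrate $\frac{1}{m!}\int_{0}^{1}f(t)\left[B_{1}^{[m-1]}(t)\right]'\,dt$ by parts. The boundary contribution $f(1)B_{1}^{[m-1]}(1)-f(0)B_{1}^{[m-1]}$ is exactly $A_{1}^{[m-1]}(f)$, read off from the definition with $k=1$, while the surviving integral is $-\frac{1}{m!}\int_{0}^{1}f'(t)B_{1}^{[m-1]}(t)\,dt$, which is the remainder term of \eqref{euler2} for $r=1$.

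For the inductive step, I would assume \eqref{euler2} holds for some $r\geq 1$, so that its remainder is $R_{r}=\frac{(-1)^{r}}{r!}\int_{0}^{1}f^{(r)}(t)B_{r}^{[m-1]}(t)\,dt$. Substituting $B_{r}^{[m-1]}(t)=\frac{1}{r+1}\left[B_{r+1}^{[m-1]}(t)\right]'$ and integrating by parts once more, the prefactor $\frac{1}{r+1}$ combines with $\frac{1}{r!}$ to give $\frac{1}{(r+1)!}$; the boundary term becomes $\frac{(-1)^{r}}{(r+1)!}\left(f^{(r)}(1)B_{r+1}^{[m-1]}(1)-f^{(r)}(0)B_{r+1}^{[m-1]}\right)$, which is precisely $A_{r+1}^{[m-1]}(f)$; and the remaining integral is $\frac{(-1)^{r+1}}{(r+1)!}\int_{0}^{1}f^{(r+1)}(t)B_{r+1}^{[m-1]}(t)\,dt=R_{r+1}$. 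Thus $R_{r}=A_{r+1}^{[m-1]}(f)+R_{r+1}$, and folding this into the inductive hypothesis upgrades the sum $\sum_{k=1}^{r}A_{k}^{[m-1]}(f)$ to $\sum_{k=1}^{r+1}A_{k}^{[m-1]}(f)$ while replacing $R_{r}$ by $R_{r+1}$, which is exactly \eqref{euler2} at level $r+1$.

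The only genuine work is bookkeeping rather than ideas: I expect the delicate point to be matching the alternating sign $\frac{(-1)^{k}}{k!}$ in the definition of $A_{k}^{[m-1]}(f)$ with the signs generated by successive integrations by parts, and confirming that the boundary term appearing at each stage carries the correct orientation $f^{(r)}(0)B_{r+1}^{[m-1]}-f^{(r)}(1)B_{r+1}^{[m-1]}(1)$, that is, with the right sign and with the polynomial evaluated at $0$ versus $1$. Once this sign-and-factorial accounting is pinned down, the identity follows, and the hypothesis $f\in C^{r}[0,1]$ is exactly what is needed to justify the $r$ successive integrations by parts.
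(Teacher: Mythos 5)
Your proof is correct: the base case and the inductive step both check out, the sign bookkeeping $\frac{(-1)^{k}}{k!}$ matches the boundary terms produced by successive integrations by parts, and the hypothesis $f\in C^{r}[0,1]$ is used exactly where needed. The paper does not reproduce a proof of this lemma (it cites \cite[Lemma 1]{QRU}), but it states that the result is obtained there by integration by parts via the Appell relation \eqref{[Sec2]-AppDer1}, which is precisely the argument you give.
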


Next, applying the substitution $f(t)=B_{r+n}^{[m-1]}(t)$ into  \eqref{euler2} and taking into account \eqref{[Sec2]-AppDer1},    \eqref{[Sec2]-Intf3} we have

\begin{equation}
\label{euler3}
\int_{0}^{1}B_{r}^{[m-1]}(t)B_{n}^{[m-1]}(t)\,dt= \frac{(-1)^{r+1}r!n!m!}{(r+n)!}\left[ \frac{B_{r+n+1}^{[m-1]}-B_{r+n+1}^{[m-1]}(1)}{r+n+1}+ \frac{1}{m!}\sum_{k=1}^{r}A_{k}^{[m-1]}\right], \quad r,n\geq 1,
\end{equation}
where
$$A_{k}^{[m-1]}= \frac{(-1)^{k}}{k}\binom{r+n}{k-1} \left(B_{r+n-k+1}^{[m-1]}B_{k}^{[m-1]}- B_{r+n-k+1}^{[m-1]}(1)B_{k}^{[m-1]}(1)\right), \quad k=1,\ldots,r.$$

The expression \eqref{euler3} is the analogue of \eqref{euler1} in the setting of the generalized Bernoulli polynomials of level $m$.
We strongly recommend to the interested reader see \cite{QRU} for the corresponding proofs of the results mentioned above.

\medskip

Let $L^{2}[0,1]$ be the space of the square-integrable functions on $[0,1]$, endowed with the norm
$$\|f\|_{L^{2}[0,1]}:=\left(\int_{0}^{1}|f(t)|^{2}dt\right)^{1/2}= \langle f,f\rangle^{1/2},$$
where
$$\langle f, g \rangle:=\int_{0}^{1}f(t)g(t)dt,  \mbox{ for every }  f,g\in L^{2}[0,1].$$

It is  not difficult  to see  that  we  can determine the norm $\|B_{n}^{[m-1]}\|_{L^{2}[0,1]}$ using \eqref{euler3}, as

\begin{eqnarray}
\nonumber
\|B_{n}^{[m-1]}\|_{L^{2}[0,1]}^{2}&=& \frac{(n!)^{2}m!(-1)^{n}}{(2n+1)!}(B_{2n+1}^{[m-1]}(1)-B_{2n+1}^{[m-1]})\\
\label{parseval1}
& & + (n!)^{2}(-1)^{n+1}\sum_{k=1}^{n}\frac{(-1)^{k}}{(2n+1-k)!k!}
(B_{2n+1-k}^{[m-1]}B_{k}^{[m-1]}-B_{2n+1-k}^{[m-1]}(1)B_{k}^{[m-1]}(1)).
\end{eqnarray}

From the trigonometric form of Fourier expansion for $f\in L^{2}[0,1]$ it is possible to deduce the following form of Parseval's identity:
\begin{equation}
\label{parseval2}
\|f\|_{L^{2}[0,1]}^{2}= \frac{\left|a_{0}(f)\right|^{2}}{4}+\frac{1}{2}\sum_{k=1}^{\infty}\left|a_{k}(f)\right|^{2}+\left|b_{k}(f)\right|^{2},
\end{equation}
where
\begin{eqnarray*}
a_{k}(f) &=& 2\int_{0}^{1}f(x)\cos(2\pi kx)\,dx,\quad k\geq 0, \\
b_{k}(f) &=& 2\int_{0}^{1}f(x)\sin(2\pi kx)\,dx,\quad k\geq 1.
\end{eqnarray*}

Hence, using  \eqref{parseval1} we show how linear combinations of the values of $\zeta(2k)$ can be obtained by applying Parseval's identity \eqref{parseval2} with the Fourier coefficients  \eqref{Fourier2}, \eqref{Fouriers3} and \eqref{Fouriers4} of the periodic generalized Bernoulli functions of level $m$.

\medskip

Applying Parseval's identity \eqref{parseval2} to $p_{n}^{[m-1]}(x)$ and using \eqref{Fourier2}-\eqref{Fouriers4}, we can deduce that

\begin{eqnarray}
\nonumber
\|B_{n}^{[m-1]}\|_{L^{2}[0,1]}^{2}&=& (n!)^{2}\left[\frac{\left(a_{0,n}^{[m-1]}\right)^{2}}{4}+\frac{1}{2}
\sum_{k=1}^{\infty}\left(a_{k,n}^{[m-1]}\right)^{2}+\left(b_{k,n}^{[m-1]}\right)^{2} \right]\\
\nonumber
&=& \frac{(B_{n+1}^{[m-1]}(1)-B_{n+1}^{[m-1]})^{2}}{(n+1)^{2}}+ 2(n!)^{2}\sum_{k=1}^{\infty}\left[\sum_{j=0}^{\lfloor\frac{n}{2}\rfloor-1}
\frac{(-1)^{j}}{(2\pi k)^{2j+2}}\left(\frac{B_{n-2j-1}^{[m-1]}(1)-B_{n-2j-1}^{[m-1]}}{(n-2j-1)!}\right)\right]^{2}\\
\label{parseval3}
& & + 2(n!)^{2}\sum_{k=1}^{\infty}\left[\sum_{j=0}^{\lfloor\frac{n}{2}\rfloor}
\frac{(-1)^{j+1}}{(2\pi k)^{2j+1}}\left(\frac{B_{n-2j}^{[m-1]}(1)-B_{n-2j}^{[m-1]}}{(n-2j)!}\right)\right]^{2}.
\end{eqnarray}

Comparing \eqref{parseval1} with \eqref{parseval3} we  obtain  the  next equality:

\begin{eqnarray}
\nonumber
\sum_{k=1}^{\infty}A_{k,n}^{2}+B_{k,n}^{2} &=&\frac{m!(-1)^{n}}{2(2n+1)!}(B_{2n+1}^{[m-1]}(1)-B_{2n+1}^{[m-1]}) -\frac{(B_{n+1}^{[m-1]}(1)-B_{n+1}^{[m-1]})^{2}}{2(n+1)^{2}}\\
\label{parseval4}
& & +\frac{(-1)^{n+1}}{2}\sum_{k=1}^{n}\frac{(-1)^{k}}{(2n+1-k)!k!}
(B_{2n+1-k}^{[m-1]}B_{k}^{[m-1]}-B_{2n+1-k}^{[m-1]}(1)B_{k}^{[m-1]}(1)),
\end{eqnarray}
where
\begin{eqnarray*}
A_{k,n} &=& \sum_{j=0}^{\lfloor\frac{n}{2}\rfloor-1}
\frac{(-1)^{j}}{(2\pi k)^{2j+2}}\left(\frac{B_{n-2j-1}^{[m-1]}(1)-B_{n-2j-1}^{[m-1]}}{(n-2j-1)!}\right), \\
B_{k,n}&=& \sum_{j=0}^{\lfloor\frac{n}{2}\rfloor}
\frac{(-1)^{j+1}}{(2\pi k)^{2j+1}}\left(\frac{B_{n-2j}^{[m-1]}(1)-B_{n-2j}^{[m-1]}}{(n-2j)!}\right).
\end{eqnarray*}

Furthermore, if  $m=1$ in  \eqref{parseval4} we recover \eqref{equa1}.

\medskip

Following the ideas of \cite{QU} we can obtain a quadrature formulae of Euler-Maclaurin type based on generalized Bernoulli polynomials of level $m\in \NN\setminus\{1\}$.

\begin{theorem}
\label{htyq2}
Let $r\geq 1$, $f\in C^{r}[a,b]$ and $m\in \NN$. For a fixed $n\in \NN$ let $x_{j}=a+jh$, $j= 0, 1,\ldots,n$, where $h =\frac{b-a}{n}$, and  $f_{j}^{(k-1)}=f^{(k-1)}(x_{j}),$ $k=1,2,\ldots,r$. Then, the following composite trapezoidal rules hold.
\begin{equation}
\label{qgenber3}
\int_{a}^{b}f(t)\,dt  = \sum_{j=0}^{n-1}\sum_{k=1}^{r}\tilde{A}_{k,j}^{[m-1]} (f)+R_{r}^{[m-1]}(f),
\end{equation}
where
$$\tilde{A}_{k,j}^{[m-1]} (f) = \frac{(-1)^{k+1}}{m!k!}h^{k}\left(f_{j+1}^{(k-1)}B_{k}^{[m-1]}(1)-f_{j}^{(k-1)}B_{k}^{[m-1]}\right), \quad 1\leq k \leq r,$$
and
$$R_{r}^{[m-1]}(f)= \frac{(-h)^{r}}{m!r!}\int_{a}^{b}f^{(r)}(t)B_{r}^{[m-1]}\left(\frac{t-a}{h}-\left\lfloor \frac{t-a}{h}\right\rfloor\right)\,dt.$$
\end{theorem}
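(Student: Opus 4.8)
The plan is to reduce the composite rule on $[a,b]$ to a subinterval-by-subinterval application of Lemma \ref{lem1}, which already encodes the single-interval Euler--Maclaurin expansion on $[0,1]$. First I would split the integral as $\int_{a}^{b}f(t)\,dt=\sum_{j=0}^{n-1}\int_{x_{j}}^{x_{j+1}}f(t)\,dt$ and, on each piece, perform the affine change of variables $t=x_{j}+hu$ with $u\in[0,1]$. Writing $g_{j}(u):=f(x_{j}+hu)$, the chain rule gives $g_{j}^{(k)}(u)=h^{k}f^{(k)}(x_{j}+hu)$, so $g_{j}\in C^{r}[0,1]$ and $\int_{x_{j}}^{x_{j+1}}f(t)\,dt=h\int_{0}^{1}g_{j}(u)\,du$.

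Next I would apply Lemma \ref{lem1} to $g_{j}$ on $[0,1]$. For the boundary terms, the values $g_{j}^{(k-1)}(0)=h^{k-1}f^{(k-1)}(x_{j})=h^{k-1}f_{j}^{(k-1)}$ and $g_{j}^{(k-1)}(1)=h^{k-1}f^{(k-1)}(x_{j+1})=h^{k-1}f_{j+1}^{(k-1)}$ feed directly into $A_{k}^{[m-1]}(g_{j})$; after multiplying through by the outer factor $h$ and collecting the exponents, each term $\tfrac{h}{m!}A_{k}^{[m-1]}(g_{j})$ becomes exactly $\tilde{A}_{k,j}^{[m-1]}(f)$. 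The only care needed here is the sign bookkeeping: the factor $(-1)^{k}$ in $A_{k}^{[m-1]}$ combines with the interchange of the two boundary contributions to produce the $(-1)^{k+1}$ appearing in $\tilde{A}_{k,j}^{[m-1]}$.

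For the remainder, the integral term $\tfrac{h}{m!}\cdot\tfrac{(-1)^{r}}{r!}\int_{0}^{1}g_{j}^{(r)}(u)B_{r}^{[m-1]}(u)\,du$ carries an extra $h^{r}$ from $g_{j}^{(r)}(u)=h^{r}f^{(r)}(x_{j}+hu)$; undoing the substitution $u=(t-x_{j})/h$ converts this into $\tfrac{(-h)^{r}}{m!r!}\int_{x_{j}}^{x_{j+1}}f^{(r)}(t)B_{r}^{[m-1]}\!\big(\tfrac{t-x_{j}}{h}\big)\,dt$, using $(-1)^{r}h^{r}=(-h)^{r}$. Summing over $j$ assembles the quadrature weights into $\sum_{j=0}^{n-1}\sum_{k=1}^{r}\tilde{A}_{k,j}^{[m-1]}(f)$ and leaves a sum of subinterval remainders to be combined.

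The one step requiring genuine attention --- and the main obstacle --- is recognizing that the piecewise arguments glue into a single periodic integrand. On $[x_{j},x_{j+1})$ one has $\lfloor (t-a)/h\rfloor=j$, whence $\tfrac{t-x_{j}}{h}=\tfrac{t-a}{h}-\lfloor (t-a)/h\rfloor$ since $x_{j}=a+jh$; thus the per-subinterval factors $B_{r}^{[m-1]}\!\big(\tfrac{t-x_{j}}{h}\big)$ are precisely the restrictions of the single periodic function $B_{r}^{[m-1]}\!\big(\tfrac{t-a}{h}-\lfloor\tfrac{t-a}{h}\rfloor\big)$. This identification collapses the sum of subinterval remainders into the stated $R_{r}^{[m-1]}(f)$, completing the argument.
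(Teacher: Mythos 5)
Your proposal is correct and follows essentially the same route as the paper's own proof: apply Lemma \ref{lem1} to the rescaled function $g_j(u)=f(x_j+hu)$ on each subinterval, track the powers of $h$ and the sign in the boundary terms, and glue the subinterval remainders into the single periodic integrand via $\frac{t-x_j}{h}=\frac{t-a}{h}-\lfloor\frac{t-a}{h}\rfloor$. The sign and floor-function bookkeeping you single out is exactly the content of the paper's argument, so nothing is missing.
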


\begin{proof}
Let $g\in C^{r}[0,1]$. By \eqref{euler2} we get

\begin{eqnarray}
\nonumber
\label{aux10}
\int_{0}^{1}g(t)\,dt  & = &\frac{1}{m!}\sum_{k=1}^{r}\frac{(-1)^{k+1}}{k!}\left(g^{(k-1)}(1)B_{k}^{[m-1]}(1)-g^{(k-1)}(0)B_{k}^{[m-1]}\right)\\
& & +\frac{(-1)^{r}}{m!r!}\int_{0}^{1}g^{(r)}(t)B_{r}^{[m-1]}(t)\,dt.
\end{eqnarray}

Taking $g(t)= f(x_{j}+ht)$ it is easy to check that $g^{(k)}(t)=h^{k}f^{(k)}(x_{j}+ht)$ for $k= 1,2,\ldots,r$. Substituting $g^{(k-1)}(1)$, $g^{(k-1)}(0)$, $g^{(r)}(t)$ into \eqref{aux10}, and making a suitable change of variable, we obtain that
\begin{eqnarray}
\nonumber
\int_{x_{j}}^{x_{j+1}}f(t)\,dt  & = &
\frac{1}{m!}\sum_{k=1}^{r}\frac{(-1)^{k+1}}{k!}h^{k}\left(f^{(k-1)}(x_{j+1})B_{k}^{[m-1]}(1)-f^{(k-1)}(x_{j})B_{k}^{[m-1]}\right)\\
\label{aux11}
& & +\frac{(-h)^{r}}{m!r!}\int_{x_{j}}^{x_{j+1}}f^{(r)}(t)B_{r}^{[m-1]}\left(\frac{t-x_{j}}{h}\right)\,dt,
\end{eqnarray}
whenever $j= 0, 1,\ldots,n-1$.

\medskip

Next, adding all these terms for $j= 0, 1,\ldots,n-1$ to both sides of \eqref{aux11}, and nothing that if  $x_{j}\leq t \leq x_{j+1}$ then $j\leq \frac{t-a}{h}\leq j+1$, we have
\begin{eqnarray*}
\int_{a}^{b}f(t)\,dt & = &\sum_{j=0}^{n-1}\int_{x_{j}}^{x_{j+1}}f(t)\,dt \\
& = &\frac{1}{m!}\sum_{j=0}^{n-1}\sum_{k=1}^{r}\frac{(-1)^{k+1}}{k!}h^{k}\left(f^{(k-1)}(x_{j+1})B_{k}^{[m-1]}(1)-f^{(k-1)}(x_{j})B_{k}^{[m-1]}\right)\\
& & + \frac{(-h)^{r}}{m!r!}\int_{a}^{b}f^{(r)}(t)B_{r}^{[m-1]}\left(\frac{t-a}{h}-\left\lfloor \frac{t-a}{h}\right\rfloor\right)\,dt.
\end{eqnarray*}

From this last equation \eqref{qgenber3} follows.

\end{proof}

\medskip

We conclude this section with a result that reveals an interesting property about the applications of the quadrature formulae of Euler-Maclaurin type \eqref{qgenber3}. Using the approach given in
\cite[pp. 117-120]{L}, it is possible to provide a theorem
comparing simultaneously the convergence of a series $\sum_{k=1}^{\infty}f(k)$ and an integral $\int_{1}^{\infty}f(x)dx$ in the setting of generalized Bernoulli polynomials of level $m$. In particular, with such a theorem we can estimate the values $\zeta(2k+1)$, for $k\geq 1$.

\medskip

Let $r\geq 1$, $f\in C^{r}[1,\infty)$. For a fixed $m\in\NN$, we will denote by

\begin{eqnarray}
\label{notaci1}
S(l)& :=& \sum_{j=1}^{l}f(j),\\
\label{notaci2}
\widetilde{\sigma}_{r}^{[m-1]}(q_{1})&:=& f(q_{1})+ \frac{1}{m!}\sum_{k=1}^{r}\frac{(-1)^{k+1}}{k!}f^{(k-1)}(q_{1})B_{k}^{[m-1]},\\
\label{notaci3}
\sigma_{r}^{[m-1]}(q_{2})& :=&\frac{1}{m!}\sum_{k=1}^{r}\frac{(-1)^{k+1}}{k!}f^{(k-1)}(q_{2})B_{k}^{[m-1]}(1),\\
\label{notaci4}
\rho_{r}^{[m-1]}(q_{1},q_{2})& :=& \frac{1}{m!}\sum_{j= q_{1}+1}^{q_{2}-1}\sum_{k=2}^{r} \frac{(-1)^{k+1}}{k!}(B_{k}^{[m-1]}(1)-B_{k}^{[m-1]})f^{(k-1)}(j),\\
\label{notaci5}
\quad R_{r}^{[m-1]}(q_{1},q_{2})&:=& \frac{(-1)^{r}}{r!}\int_{q_{1}}^{q_{2}}f^{(r)}(t)B_{r}^{[m-1]}(t-\lfloor t \rfloor)\,dt,
\end{eqnarray}
where $l,q_{1},q_{2} \in\NN$. As well as, we will consider the following limits:

\begin{eqnarray*}
S(\infty)&:=&\lim_{l\to\infty}S(l),\\
\sigma_{r}^{[m-1]}(\infty)&:=&\lim_{q_{2}\to\infty}\sigma_{r}^{[m-1]}(q_{2}),\\
\rho_{r}^{[m-1]}(q_{1},\infty)&:=&\lim_{q_{2}\to\infty}\rho_{r}^{[m-1]}(q_{1},q_{2}),\\
R_{r}^{[m-1]}(q_{1},\infty)&:=&\lim_{q_{2}\to\infty}R_{r}^{[m-1]}(q_{1},q_{2}),\\
e_{r}^{[m-1]}(q_{1})&:=&\rho_{r}^{[m-1]}(q_{1},\infty),\\
\delta_{r}^{[m-1]}(q_{1})&:=& R_{r}^{[m-1]}(q_{1},\infty).
\end{eqnarray*}

For the reader's convenience, we  recall  the definition of Euler's constant for a function $f$ (cfr. \cite[p. 118]{L}). For $f\in C^{r}[1,\infty)$ and any $n\in\NN$ let us consider the sequence
\begin{equation}
\label{eulerq}
\gamma_{n}(f):=\sum_{i=1}^{n}f(i)-\int_{1}^{n}f(t)\,dt.
\end{equation}
Euler's constant for function $f$  is defined as the limit
\begin{equation}
\label{eulerq1}
\gamma(f):=\lim_{n\to\infty}\gamma_{n}(f),
\end{equation}
whenever such limit exists and be finite.

\medskip

The quadrature formulae of Euler-Maclaurin type \eqref{qgenber3} is also of theoretical interest. More precisely, the definitions  \eqref{eulerq}, \eqref{eulerq1} and the formulae \eqref{qgenber3} imply the following result:

\begin{theorem}
\label{teoconv}
For a fixed $m\in\NN$, every $r, p, n\in \NN$ and $f\in C^{r}[1,\infty)$. Assume that $\rho_{r}^{[m-1]}(1,\infty)$, $\,\int_{1}^{\infty}|f^{(r)}(t)|\,dt$ converge, and the finite limit $\lambda_{0}:=\lim_{n\to\infty}f(n)\,$ exists, then
\begin{enumerate}
  \item[(a)] The integral $\int_{1}^{\infty}f(t)\,dt$ converges if and only if the series  $\sum_{j=1}^{\infty}f(j)$ converges.
\item[(b)] If the integral $\int_{1}^{\infty}f(t)\,dt$ converges, then
\begin{eqnarray}
\nonumber
\int_{1}^{\infty}f(t)\,dt&=&\int_{1}^{p}f(t)\,dt + S(\infty)-S(p-1)+ \sigma_{r}^{[m-1]}(\infty)-\widetilde{\sigma}_{r}^{[m-1]}(p)+ e_{r}^{[m-1]}(p)\\
& & +\,\delta_{r}^{[m-1]}(p).
\end{eqnarray}
\end{enumerate}
\end{theorem}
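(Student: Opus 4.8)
The plan is to apply the quadrature formula \eqref{qgenber3} from Theorem \ref{htyq2} to a generic interval $[1,q_2]$, discretized with unit step size $h=1$ and nodes $x_j = j$, and then to extract from it a comparison between the partial sums $S(q_2)$ and the integral $\int_1^{q_2} f(t)\,dt$. With $h=1$ and integer nodes the boundary terms in $\tilde A_{k,j}^{[m-1]}(f)$ telescope: the leading $k=1$ contribution produces trapezoidal end-corrections $f(q_2)$ and $f(1)$, while the $k\geq 2$ contributions produce interior terms built from $B_k^{[m-1]}(1)-B_k^{[m-1]}$, exactly the combination appearing in $\rho_r^{[m-1]}(q_1,q_2)$ of \eqref{notaci3}--\eqref{notaci4}. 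The remainder $R_r^{[m-1]}(f)$ with $h=1$ and $a=1$ collapses to the periodized form $\frac{(-1)^r}{r!}\int_{q_1}^{q_2} f^{(r)}(t)B_r^{[m-1]}(t-\lfloor t\rfloor)\,dt$, which is precisely $R_r^{[m-1]}(q_1,q_2)$ in \eqref{notaci5}. Thus the first step is purely bookkeeping: rewrite \eqref{qgenber3} in the form
\begin{equation*}
\int_{1}^{q_2}f(t)\,dt = S(q_2) - S(q_1) + \sigma_r^{[m-1]}(q_2) - \widetilde{\sigma}_r^{[m-1]}(q_1) + \rho_r^{[m-1]}(q_1,q_2) + R_r^{[m-1]}(q_1,q_2),
\end{equation*}
identifying each notationally-defined block with its quadrature counterpart.

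Next I would prove part (a). The hypotheses supply convergence of $\rho_r^{[m-1]}(1,\infty)$ and of $\int_1^\infty|f^{(r)}(t)|\,dt$; the latter forces $R_r^{[m-1]}(1,q_2)$ to converge as $q_2\to\infty$ because $B_r^{[m-1]}(t-\lfloor t\rfloor)$ is bounded on $[0,1]$ by periodicity, so the integrand is dominated by a constant times $|f^{(r)}(t)|$. The existence of $\lambda_0=\lim_{n\to\infty}f(n)$ controls the $k=1$ boundary term $\sigma_r^{[m-1]}(q_2)$ (whose leading piece is $\tfrac{1}{m!}f(q_2)B_1^{[m-1]}(1)$, and the higher derivatives $f^{(k-1)}(q_2)$ must tend to limits compatible with $\int_1^\infty|f^{(r)}|<\infty$). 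Fixing $q_1=1$ and letting $q_2\to\infty$ in the displayed identity, every term on the right except $S(q_2)$ has a finite limit; hence $\int_1^{q_2}f(t)\,dt$ converges precisely when $S(q_2)=\sum_{j=1}^{q_2}f(j)$ converges. This is the if-and-only-if of (a).

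For part (b) I would take the same identity with $q_1=p$ and pass to the limit $q_2\to\infty$, using the limit definitions $S(\infty)$, $\sigma_r^{[m-1]}(\infty)$, $e_r^{[m-1]}(p)=\rho_r^{[m-1]}(p,\infty)$, and $\delta_r^{[m-1]}(p)=R_r^{[m-1]}(p,\infty)$. This directly yields
\begin{equation*}
\int_1^\infty f(t)\,dt = \int_1^p f(t)\,dt + S(\infty) - S(p-1) + \sigma_r^{[m-1]}(\infty) - \widetilde{\sigma}_r^{[m-1]}(p) + e_r^{[m-1]}(p) + \delta_r^{[m-1]}(p),
\end{equation*}
after splitting $\int_1^\infty = \int_1^p + \lim_{q_2\to\infty}\int_p^{q_2}$ and noting that the partial-sum normalization of $\widetilde\sigma$ versus $\sigma$ accounts for the shift from $S(p)$ to $S(p-1)$ (the extra $f(p)$ absorbed into $\widetilde\sigma_r^{[m-1]}(p)$ per \eqref{notaci2}).

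The main obstacle I anticipate is \emph{justifying the passage to the limit for the boundary term $\sigma_r^{[m-1]}(q_2)$}: showing that the higher-order derivatives $f^{(k-1)}(q_2)$, $k=2,\dots,r$, actually converge as $q_2\to\infty$. The stated hypotheses give $\int_1^\infty|f^{(r)}|<\infty$ and $\lim f(n)=\lambda_0$, but one must argue that these, together with convergence of $\rho_r^{[m-1]}(1,\infty)$, pin down the limiting values of the intermediate derivatives (e.g. via the Fundamental Theorem of Calculus applied successively, $f^{(k-1)}(q_2)=f^{(k-1)}(1)+\int_1^{q_2}f^{(k)}$, and integrability bounds). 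Verifying that $\sigma_r^{[m-1]}(\infty)$ is well-defined and finite is the one genuinely analytic point; the rest is the algebraic reorganization of the quadrature identity into the notation of \eqref{notaci1}--\eqref{notaci5}.
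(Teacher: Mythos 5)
Your proposal is correct and follows essentially the same route as the paper: substitute $a=p$, $b=n$, $h=1$ into the quadrature formula \eqref{qgenber3}, reorganize it into the blocks $S$, $\sigma_{r}^{[m-1]}$, $\widetilde{\sigma}_{r}^{[m-1]}$, $\rho_{r}^{[m-1]}$, $R_{r}^{[m-1]}$, bound the remainder by $\mu_{r}^{[m-1]}=\max\{|B_{r}^{[m-1]}(x)|:0\leq x\leq 1\}$ times $\int|f^{(r)}|$, and pass to the limit. The only cosmetic difference is that the paper channels part (a) through the Euler constant $\gamma(f)$ of \eqref{eulerq}--\eqref{eulerq1} before concluding, whereas you take the limit directly in the identity; the analytic point you flag (convergence of the boundary derivatives $f^{(k-1)}(q_{2})$, hence of $\sigma_{r}^{[m-1]}(q_{2})$) is treated no more carefully in the paper, which simply remarks that convergence of $\rho_{r}^{[m-1]}(1,\infty)$ forces $f^{(k-1)}(n)\to 0$ for $k=2,\ldots,r$.
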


Notice that if $\rho_{r}^{[m-1]}(1,\infty)$ converges, then $\lim_{n\to\infty}f^{(k-1)}(n)=0$ for every $k=2,\ldots,r$.

\begin{proof}
Without loss of generality we can assume that $p\leq n$. The substitution $a=p$, $b=n$ and $h=1$ into \eqref{qgenber3} yields the identity
\begin{eqnarray}
\nonumber
\label{cdr1}
\int_{p}^{n}f(t)\,dt &=& \sum_{j=p}^{n-1}f(j) + \frac{1}{m!}\sum_{k=1}^{r}\frac{(-1)^{k+1}}{k!}f^{(k-1)}(n)B_{k}^{[m-1]}(1)\\
\nonumber
& &-\frac{1}{m!}\sum_{k=1}^{r}\frac{(-1)^{k+1}}{k!}f^{(k-1)}(p)B_{k}^{[m-1]}-f(p)\\
\nonumber
& & + \frac{1}{m!}\sum_{j=p+1}^{n-1}\sum_{k=2}^{r} \frac{(-1)^{k+1}}{k!}(B_{k}^{[m-1]}(1)-B_{k}^{[m-1]})f^{(k-1)}(j)\\
& &+ \frac{(-1)^{r}}{r!}\int_{p}^{n}f^{(r)}(t)B_{r}^{[m-1]}(t-\lfloor t \rfloor)\,dt.
\end{eqnarray}

By \eqref{notaci1}-\eqref{notaci5} we can rewrite \eqref{cdr1} as follows
\begin{eqnarray}
\label{cdr2}
\nonumber
\int_{p}^{n}f(t)\,dt &=& S(n-1)-S(p-1)-f(n)+\sigma_{r}^{[m-1]}(n)-\widetilde{\sigma}_{r}^{[m-1]}(p)\\
& &+ \rho_{r}^{[m-1]}(p,n)+ R_{r}^{[m-1]}(p,n),
\end{eqnarray}
where $S(0)=0$ by definition.

\medskip

The remainder $R_{r}^{[m-1]}(p,n)$ can be estimated by
\begin{equation}
\label{cdr3}
\left|R_{r}^{[m-1]}(p,n)\right|\leq \frac{\mu_{r}^{[m-1]}}{r!}\int_{p}^{n}|f^{(r)}(t)|\,dt,
\end{equation}
where $\mu_{r}^{[m-1]}=\max\{|B_{r}^{[m-1]}(x)|:0\leq x \leq 1\}.$

\medskip

By \eqref{eulerq} and the formula \eqref{cdr2} we obtain
\begin{equation}
\label{cdr4}
\gamma_{n}(f)= f(n)+\widetilde{\sigma}_{r}^{[m-1]}(1)-\sigma_{r}^{[m-1]}(n)-\rho_{r}^{[m-1]}(1,n)- R_{r}^{[m-1]}(1,n).
\end{equation}

Our assumptions imply, according to \eqref{cdr4}, that  the Euler's constant for the function $f$, $\gamma(f)$, exists and  the next equality is satisfied:

\begin{equation}
\label{cdr5}
\gamma(f) = \lambda_{0}+\widetilde{\sigma}_{r}^{[m-1]}(1)-\sigma_{r}^{[m-1]}(\infty)-\rho_{r}^{[m-1]}(1,\infty)- R_{r}^{[m-1]}(1,\infty).
\end{equation}

Now, from \eqref{cdr4} and \eqref{cdr5} we have
\begin{equation}
\label{cdr6}
\gamma(f)= \gamma_{n}(f)+\lambda_{0}-f(n)+\sigma_{r}^{[m-1]}(n)-\sigma_{r}^{[m-1]}(\infty)-e_{r}^{[m-1]}(n-1)- \delta_{r}^{[m-1]}(n),
\end{equation}
where
\begin{equation*}
\left|\delta_{r}^{[m-1]}(n)\right|\leq \frac{\mu_{r}^{[m-1]}}{r!}\int_{n}^{\infty}\left|f^{(r)}(t)\right|\,dt.
\end{equation*}

Thus, substituting \eqref{eulerq} into \eqref{cdr6} and using \eqref{notaci1} we obtain

\begin{eqnarray}
\label{cdr7}
\nonumber
\int_{1}^{n}f(t)\,dt &=& S(n)-\gamma(f) +\lambda_{0}-f(n) + \sigma_{r}^{[m-1]}(n)-\sigma_{r}^{[m-1]}(\infty)-e_{r}^{[m-1]}(n-1)\\
& &- \delta_{r}^{[m-1]}(n).
\end{eqnarray}

Finally, part $(a)$ of Theorem \ref{teoconv} can be deduced from \eqref{cdr7}. In order to obtain  part $(b)$ of Theorem \ref{teoconv} it suffices to consider \eqref{cdr2} and the equality $\int_{p}^{n}f(t)\,dt=\int_{1}^{n}f(t)\,dt-\int_{1}^{p}f(t)\,dt$.

\end{proof}

The interested reader may consult the analogous result for $m=1$ in \cite[Theorem 2]{L}.

\medskip

\begin{example}
\label{examp1} To compute $\zeta(3)=\sum_{k=1}^{\infty}\frac{1}{k^{3}}$, we can put $m=5$, $r=2$, $p=100$, $f(x)=\frac{1}{x^{3}}$, $x\in [1,\infty)$ and apply part (b) of Theorem \ref{teoconv}.  Then, we obtain
\begin{eqnarray*}
S(99)&=& 1.2020064006596776104,\\
\sigma_{2}^{[4]}(p)&=& \frac{5}{6p^{3}}+ \frac{85}{84p^{4}},\\
\sigma_{2}^{[4]}(100)&=& 8.4345238095238095238\times10^{-7},\\
\sigma_{2}^{[4]}(\infty) &=& 0,\\
\widetilde{\sigma}_{2}^{[4]}(p)&=& \frac{5}{6p^{3}}+\frac{1}{84p^{4}},\\
\widetilde{\sigma}_{2}^{[4]}(100)&=& 8.3345238095238095238\times10^{-7},\\
e_{2}^{[4]}(100)&=&3.2836666500022217224\times10^{-7}.
\end{eqnarray*}
Next, part (b) of Theorem \ref{teoconv} gives
\begin{eqnarray*}
\zeta(3) &=& \int_{100}^{\infty}\frac{dt}{t^{3}}+S(99)- \sigma_{2}^{[4]}(\infty)+\widetilde{\sigma}_{2}^{[4]}(100)-e_{2}^{[4]}(100)-\delta_{2}^{[4]}(100)\\
&=& 0.00005+ 1.2020064006596776104\\
& & +(8.3345238095238095238-3.2836666500022217224)\times10^{-7}
-\delta_{2}^{[4]}(100)\\
&=&1.2020560622930126102-\delta_{2}^{[4]}(100).
\end{eqnarray*}

Since
$$ \delta_{2}^{[4]}(100)\approx3.10296\times 10^{-7},$$
we obtain the following estimates for $\zeta(3)$:
\begin{equation}
\label{estima1}
\zeta(3) \approx 1.2020557519970993510.
\end{equation}
In this case, our approximation is accurate up to five decimal places of $\zeta(3)= 1.2020569031595942854...$.

Since, for $p\geq 1$,
$$\left|\delta_{2}^{[4]}(p)\right|\leq \frac{\mu_{2}^{[4]}}{2}\int_{p}^{\infty}\frac{12}{t^{5}}\,dt=\frac{850}{7p^{4}}.$$
Then,
$$\left|\delta_{2}^{[4]}(100)\right|\leq 0.000001214285714,$$
and the estimate \eqref{estima1} could be refined in order to get an accurate up to six decimal places.
\end{example}

\begin{example}
\label{examp2} Now, we will estimate $\zeta(3)=\sum_{k=1}^{\infty}\frac{1}{k^{3}}$, taking $m=2$, $r=2$, $p=20$, $f(x)=\frac{1}{x^{3}}$, $x\in [1,\infty)$ and  apply part (b) of Theorem \ref{teoconv} again.  In this case, we have
\begin{eqnarray*}
S(19)&=& 1.2020064006596776104,\\
\sigma_{2}^{[1]}(p)&=& \frac{2}{3p^{3}}+ \frac{7}{12p^{4}},\\
\sigma_{2}^{[1]}(20)&=& 0.000086979166666666666667,\\
\sigma_{2}^{[1]}(\infty) &=& 0,\\
\widetilde{\sigma}_{2}^{[1]}(p)&=&\frac{2}{3p^{3}}+\frac{1}{12p^{4}},\\
\widetilde{\sigma}_{2}^{[1]}(20)&=&0.000083854166666666666667,\\
e_{2}^{[1]}(20)&=& 0.00002244785177830327.
\end{eqnarray*}
From  part (b) of Theorem  \ref{teoconv} we get
\begin{eqnarray*}
\zeta(3) &=& \int_{20}^{\infty}\frac{dt}{t^{3}}+S(19)- \sigma_{2}^{[1]}(\infty)+\widetilde{\sigma}_{2}^{[1]}(20)-e_{2}^{[1]}(20)-\delta_{2}^{[1]}(20)\\
&=& 0.00125+ 1.2007428419584369581\\
& & +0.000083854166666666666667-0.00002244785177830327
-\delta_{2}^{[1]}(20)\\
&=&1.2020560522930126102-\delta_{2}^{[1]}(20).
\end{eqnarray*}

Since
$$ \delta_{2}^{[1]}(20)\approx9.40\times 10^{-7},$$
we obtain the following numerical approximation of $\zeta(3)$
\begin{equation}
\label{estima2}
\zeta(3) \approx 1.2019663288791965826,
\end{equation}
which only is accurate up to two decimal places of
$\zeta(3)= 1.2020569031595942854...$.
\end{example}

\begin{example}
\label{examp3}
To estimate  $\zeta(5)=\sum_{k=1}^{\infty}\frac{1}{k^{5}}$, we put $m=2$, $r=6$, $p=30$, $f(x)=\frac{1}{x^{5}}$, $x\in [1,\infty)$ and apply part (b) of Theorem \ref{teoconv}.  In this case, we have
\begin{eqnarray*}
S(29)&=& 1.0369274253541474188,\\
\sigma_{6}^{[1]}(p)&=&
\frac{2}{3p^{5}}+\frac{35}{36p^{6}}+\frac{8}{9p^{7}}+\frac{77}{216p^{8}}-\frac{26}{81p^{9}}-\frac{151}{270p^{10}},\\
\sigma_{6}^{[1]}(30)&=& 2.8809650704405569010\times10^{-8},\\
\sigma_{6}^{[1]}(\infty) &=& 0,\\
\widetilde{\sigma}_{6}^{[1]}(p)&=&
\frac{2}{3p^{5}}+\frac{5}{36p^{6}}+\frac{1}{18p^{7}}-\frac{7}{216p^{8}}-\frac{5}{81p^{9}}-\frac{1}{270p^{10}},\\
\widetilde{\sigma}_{6}^{[1]}(30)&=&2.7627849714267435143\times10^{-8},\\
e_{6}^{[1]}(30)&=& 6.48060252152\times10^{-9}.
\end{eqnarray*}
From  part (b) of Theorem  \ref{teoconv} we get
\begin{eqnarray*}
\zeta(5) &=& \int_{30}^{\infty}\frac{dt}{t^{5}}+S(29)- \sigma_{3}^{[1]}(\infty)+\widetilde{\sigma}_{3}^{[1]}(30)-e_{3}^{[1]}(30)-\delta_{3}^{[1]}(30)\\
&=& 3.0864197530864197531\times10^{-7}+ 1.0369274253541474188\\
& & +2.7627902250673169740\times10^{-8}-6.47839130112\times10^{-9}
-\delta_{6}^{[1]}(30)\\
&=&1.0369277263337192158-\delta_{6}^{[1]}(30).
\end{eqnarray*}

According to
$$ \delta_{6}^{[1]}(30)\approx
-3.9236379933251\times10^{-51},$$
we obtain the following numerical approximation of $\zeta(5)$
\begin{equation}
\label{estima3}
\zeta(5) \approx 1.0369277263337192158.
\end{equation}
So, our approximation is accurate up to seven decimal places of
$\zeta(5)= 1.0369277551433699263...$.
\end{example}

In \cite[Example 5]{L} the examples \ref{examp1} and \ref{examp2} are considered for the level $m=1$. Indeed, putting $r=2$ and $p=20$ the estimate \eqref{estima1} is also obtained. So, from a numerical viewpoint the level $m=1$ seems to provide a  low computational cost.

\medskip

Finally, the numerical evidence corresponding to the examples \ref{examp1}-\ref{examp3} suggests that when $m>1$ for obtaining
higher precision for our approximations to the series  $\sum_{j=1}^{\infty}f(j)$ we need only use higher values of $r$ in part (b) of Theorem  \ref{teoconv}.

\begin{example}
\label{examp4} Using part (a) of Theorem \ref{teoconv}  we can deduce that the series
$$\sum_{k=1}^{\infty}\frac{\cos(\sqrt{k})}{k}$$
converges, since
$$\int_{1}^{\infty}\frac{\cos(\sqrt{t})}{t}\,dt\approx -0.67480784580193626932...$$

The above approximation was performed using MAPLE 15. However, it is not difficult to show that the integral  $\int_{1}^{\infty}\frac{\cos(\sqrt{t})}{t}\,dt$ converges.
Notice that
$$2\int_{1}^{b}\frac{d(\sin(\sqrt{t}))}{\sqrt{t}}=\int_{1}^{b}\frac{\cos(\sqrt{t})}{t}\,dt,$$
and by  the formula for integration by parts of Riemann-Stieltjes, we have:
$$2\left[\int_{1}^{b}\frac{d(\sin(\sqrt{t}))}{\sqrt{t}}+ \int_{1}^{b}\sin(\sqrt{t})d\left(\frac{1}{\sqrt{t}}\right)\right] =2\left(\frac{\sin(\sqrt{b})}{\sqrt{b}}-\sin(1)\right).$$
Consequently,
$$\int_{1}^{b}\frac{\cos(\sqrt{t})}{t}\,dt=2\left(\frac{\sin(\sqrt{b})}{\sqrt{b}}-\sin(1)\right)+\int_{1}^{b}\frac{\sin(\sqrt{t})}{t^{3/2}}\,dt,$$
since $\lim_{b\to\infty}\frac{\sin(\sqrt{b})}{\sqrt{b}}=0$ and the integral $\int_{1}^{\infty}\frac{\sin(\sqrt{t})}{t^{3/2}}\,dt$ converges, then
$$\int_{1}^{\infty}\frac{\cos(\sqrt{t})}{t}\,dt \quad \mbox{ converges.}$$

We can provide another solution by using Dirichlet's test for improper integrals (see for instance, \cite[Example 4]{L} where a similar series is considered.)
\end{example}

\bigskip


\end{document}